\newtheorem{Theorem}{Theorem}[section]
\newtheorem{Corollary}{Corollary}[section]
\newtheorem{Definition}{Definition}[section]
\newtheorem*{Definition*}{Definition}
\newtheorem{Example}{Example}[section]
\newtheorem{Proposition}{Proposition}[section]
\newtheorem{Remark}{Remark}[section]
\numberwithin{equation}{section}
\begin{document}
	
	%
	%
	%
	%
	%
	%
	%
	%
	%

	\title[The skew James type constant in Banach spaces]
	{The skew James type constant\\in Banach spaces}

	\author[Zhiyong Rao]{Zhiyong Rao}
	\address{School of Mathematics and Physics, Anqing Normal University, Anqing 246133, People’s Republic of China}
	\email{17855677860@163.com}
	\thanks{$^{*}$Corresponding author}
	\author{Qi Liu$^{*}$}
	\address{School of Mathematics and Physics, Anqing Normal University, Anqing 246133, People’s Republic of China}
	\email{liuq325@mail2.sysu.edu.cn}
	\author{Qiong Wu}
	\address{School of Mathematics and Physics, Anqing Normal University, Anqing 246133, People’s Republic of China}
	\email{wuqiongaqtc@163.com}
	\author{Zhouping Yin}
	\address{School of Mathematics and Physics, Anqing Normal University, Anqing 246133, People’s Republic of China}
	\email{yzp@aqnu.edu.cn}
	\author{Qichuan Ni}
	\address{School of Mathematics and Physics, Anqing Normal University, Anqing 246133, People’s Republic of China}
	\email{080821074@stu.aqnu.edu.cn}
	
	\subjclass{46B20}
	
	\keywords{Banach Spaces; James type constants; von Neumann-Jordan type constants}
	
	\date{---}
	\dedicatory{---}
	
	\begin{abstract}
		In the past, Takahashi has introduced the James type constants $\mathcal{J}_{\mathcal{X}, t}(\tau)$. Building upon this foundation, we introduce an innovative skew James type constant, denoted as $\mathcal{J}_{ t}[\tau,\mathcal{X}]$, which is perceived as a skewed counterpart to the traditional James type constants. We delineate a novel constant, and proceed to ascertain its equivalent representations along with certain attributes within the context of Banach spaces, and then an investigation into the interrelation between the skewness parameter and the modulus of convexity is conducted, after that we define another new constant $\mathcal{G}_t(\mathcal{X}),$ and some conclusions were drawn.
	\end{abstract}
	
	\maketitle
	\section{Introduction}
	Functional analysis is a branch developed from modern mathematics. Since the beginning of the 20th century, it has gradually developed into a quite complete scientific theory through the research of mathematicians. In various research fields of functional analysis, geometric theory in Banach Spaces not only provides basic tools, but also provides important research results, such as fixed point theory and approximation theory. Among them, the theory of fixed points is the most important application of Banach space geometry theory, which is self-evident in importance and is widely used in many different fields.\\
	Since Banach spaces are a broad class of abstract spaces, it is quite challenging to describe their geometric structure comprehensively and intuitively. Therefore, one can consider using spatial geometric constants to study the geometric structure of space. Since spatial geometric constants quantify the properties of spatial geometry, using these constants can transform the problem of describing abstract geometric structures into a computational issue involving specific functions, thus advancing from qualitative discussion to quantitative calculation. So in this theory, the geometric properties of Banach Spaces usually play an important role, such as uniform non-squareness, uniform convexity, normal structure, and so on.\\
	The quantization of geometric constants describing geometric features in Banach space is very important for the study of geometric properties, this has also attracted the interest of many scholars \cite{2, 5, 13, 14, 19, 20, 21, 22, 26, 27}. In recent years, geometric constants describing the geometric properties of Banach spaces have been extensively defined and studied, which makes some problems in Banach spaces easier to handle and solve. The geometric constants of a specific space help deepen our understanding of its geometric features and provide robust support for further research endeavors.\\
	In 1936, Clarkson \cite{10} introduced the modulus of convexity $\delta_\mathcal{X}\left(\varepsilon\right),$ which is an effective tool for studying regular structures. It is the basis of the method used by Jordan and von Neumann to characterize inner product spaces.\\
	On this basis, in 1937, Clakson defined the von Neumann-Jordan type constants \cite{4}, that is\\
	$$\mathcal{C}_{\mathcal{N}\mathcal{J}}(\mathcal{X})=\sup\left\{\frac{\left\|x_1+x_2\right\|^2+\left\|x_1-x_2\right\|^2}{2\left\|x_1\right\|^2+\left\|x_2\right\|^2}:x_1,x_2\in \mathcal{X},\left\|x_1\right\|^2+\left\|x_2\right\|^2\neq0\right\}\cdot$$
	\\In 1964, James introduced the James constant\\
	$$\mathcal{J}(\mathcal{X})=\sup\left\{\min\left\{\left\|x_1+x_2\right\|,\left\|x_1-x_2\right\|\right\}{:}x_1,x_2\in \mathcal{B}_{\mathcal{X}}\right\},$$
	\\it is used to describe the uniform non-square, so it is also called the non-square constant \cite{24}.\\
	In 1970, Goebelx \cite{23} proposed the convex coefficient\\ $$\varepsilon_0\left(\mathcal{X}\right)=\sup\left\{\varepsilon>0:\delta_\mathcal{X}\left(\varepsilon\right)=0\right\},$$
	\\and use it to study consistent formal structures.\\
	In 1994, J.Garcia-Falset introduced the constant $\mathcal{R}\left(\mathcal{X}\right),$ that is\\
	$$\mathcal{R}\left(\mathcal{X}\right)=\sup\left\{\underset{n\to\infty}{\operatorname*{\operatorname*{limsup}}}\Vert x_{n}+x\Vert{:}x,x_{n}\in \mathcal{B}_{\mathcal{X}},n=1,2,\cdots,x_{n}\xrightarrow{\omega}0\right\},$$
	\\and he proved in 1997 that Banach Spaces have a fixed point property when $\mathcal{R}(\mathcal{X})<2$ \cite{25}.\\
	In 2002, Zbaganu proposed a new constant called the Zbaganu constant \cite{2,14,15}. It is expressed in the following form\\
	$$\mathcal{C}_\mathcal{Z}(\mathcal{X})=\sup\left\{\frac{\parallel x_1+x_2\parallel\parallel x_1-x_2\parallel}{\parallel x_1\parallel^2+\parallel x_2\parallel^2}{:}x_1,x_2\in \mathcal{X},(x_1,x_2)\neq(0,0)\right\}.$$
	\\In 2008, Yang C. S. et al. gave some properties of the generalized James constant $\mathcal{J}[t, \mathcal{X}],$ and demonstrated that when $t\geq1$, if $\mathcal{J}\left(t,\mathcal{X}\right)<\frac{t+\sqrt{t^{2}+4}}{2},$ then Banach space $\mathcal{X}$ has a uniformly normal structure.\\
	In 2010, Moslehian and Rassias succeeded in deriving an innovative equivalent formulation of the inner product space as documented in the scholarly literature \cite{29}.
	For any nonnegative real numbers $\lambda,\mu$ and any $x_1,x_2\in \mathcal{X}$, we define that:\\
	A normed space $(\mathcal{X},\|\cdot\|)$ is an inner product space if and only if\\ $$\|\lambda x_1+\mu x_2\|^2+\|\mu x_1-\lambda x_2\|^2=(\lambda^2+\mu^2)(\|x_1\|^2+\|x_2\|^2).$$
	\\In 2012, Yang C. S. et al. \cite{20} studied and expanded some properties of the von Neumann-Jordan constant, used the von Neumann-Jordan constant to describe the equivalence conditions of uniform non-square, and introduced the relationship between $\mathcal{C}_{1}^{^{\prime}}(\mathcal{X})$ and other spatial constants, such as the relationship between $\mathcal{C}_{1}^{^{\prime}}(\mathcal{X})$ and $\mathcal{C}_1(\mathcal{X})$.\\
	In 2019, A. mini-Harandi and M.Rahimi \cite{28} introduced some new constants $\mathcal{C}_{\lambda,p}(\mathcal{X})$ and $\mathcal{C}_{\lambda,p}^{^{\prime}}(\mathcal{X})$ of real normed space $\mathcal{X}$ based on the research work of many scholars, and provided a series of results about these constants. Then, they give some descriptions of Hilbert Spaces and uniformly non-square Spaces, and finally get the conditions under which Banach space $\mathcal{X}$ contains a normal structure.\\
	In 2021, Liu et al. introduced the constant $\mathcal{L}^{\prime}_{\mathcal{Y}\mathcal{J}}(\lambda,\mu,\mathcal{X})$, as follows\\
	$$\mathcal{L}^{\prime}_{\mathcal{Y}\mathcal{J}}(\lambda,\mu,\mathcal{X})=\sup\left\{\frac{\|\lambda x_1+\mu x_2\|^2+\|\mu x_1-\lambda x_2\|^2}{2(\lambda^2+\mu^2)},x_1,x_2\in \mathcal{S}_\mathcal{X} \right\}.$$
	\\We collected some of its properties in Banach space \cite{1}.\\
	\\(i) $1 \leq \mathcal{L}_{\mathcal{Y} \mathcal{J}}^{\prime}(\lambda, \mu, \mathcal{X}) \leq 1+\frac{2 \lambda \mu}{\lambda^2+\mu^2}$;\\
	(ii) $\mathcal{X}$ is Hilbert space when and only when $\mathcal{L}_{\mathcal{Y} \mathcal{J}}^{\prime}(\lambda, \mu, \mathcal{X})=1$;\\
	(iii) $\mathcal{X}$ is uniformly non-square when and only when\\ 
	$$\mathcal{L}_{\mathcal{Y} \mathcal{J}}^{\prime}(\lambda, \mu, \mathcal{X})<1+\frac{2 \lambda \mu}{\lambda^2+\mu^2}.$$
	\\Particularly, let $\lambda=1$ and $\mu=\tau$, for the new constant $\mathcal{J}_{ t}[\tau,\mathcal{X}]$ introduced in this paper, if $t=2$, we have\\
	$$\mathcal{L}^{\prime}_{\mathcal{Y}\mathcal{J}}(\lambda,\mu,\mathcal{X})=\frac{\left(\mathcal{J}_t[\tau, \mathcal{X}]\right)^2}{1+\tau^2}.$$
	\\Two years later, Yang et al. calculated the exact value of the constant\\ $\mathcal{L}_{\mathcal{Y} \mathcal{J}}^{\prime}(\lambda, \mu, \mathcal{X})$ in the regular octagon space \cite{8}.\\
	In 2023, Fu et al. introduced three constants: $\mathcal{E}[t, \mathcal{X}], \mathcal{C}_{\mathcal{N}\mathcal{J}}[\mathcal{X}],$ and $\mathcal{J}[t, \mathcal{X}]$. They are viewed as skew versions of the Gao’s parameter, the Von Neumann-Jordan constant, and the Milman’s moduli, respectively. They are defined as follows\\
	$$\begin{aligned}&\mathcal{E}[t, \mathcal{X}]=\sup\{\|x_1+tx_2\|^2+\|tx_1-x_2\|^2:x_1,x_2\in \mathcal{S}_\mathcal{X}\}, \quad t\in\mathbb{R},\\
		&\mathcal{C}_{\mathcal{N}\mathcal{J}}[\mathcal{X}]=\sup\left\{\frac{\|x_1+tx_2\|^{2}+\|tx_1-x_2\|^{2}}{2(1+t^{2})}:x_1,x_2\in\mathcal{S}_\mathcal{X}\right\},0\leq t\leq1,\\
		&\mathcal{J}[t, \mathcal{X}]=\sup\{\min\{\|x_1+tx_2\|,\|tx_1-x_2\|\}:x_1,x_2\in \mathcal{S}_\mathcal{X}\},\quad t>0.\end{aligned}$$
	\\For more information about them, please refer to \cite{9}.
	
	\section{Notions and preliminaries}
	In this section, we present some definitions and preliminary results that will be used throughout this article.\\
	Throughout this article, we assume that $\mathcal{X}$ is a non-trivial Banach space and that its dimension is at least 2. $\mathcal{B}_\mathcal{X} =\{x \in \mathcal{X} : \|x\| \leq 1\}$ denotes the unit ball of the Banach space $\mathcal{X}$ and $\mathcal{S}_\mathcal{X} = \{x \in \mathcal{X} :\|x\| = 1\}$ be the unit sphere of $\mathcal{X}$.\\
	As a generalization of the James constant, Takahashi introduced the James type constant $\mathcal{J}_{\mathcal{X}, t}(\tau)$, as follows:
	\begin{Definition} \cite{2} For $\tau\geq0$ and $-\infty \leq t < +\infty$, the constant $\mathcal{J}_{\mathcal{X}, t}(\tau)$ is defined to be\\
		$$\mathcal{J}_{\mathcal{X}, t}(\tau) = \sup \{\mathcal{M}_t(\| x_1 +\tau x_2\| , \| x_1 -\tau x_2\| ) : x_1, x_2\in \mathcal{S}_\mathcal{X} \}.$$
		\\Particularly, for the new constant $\mathcal{J}_{t}[\tau,\mathcal{X}]$ introduced in this paper, let $\tau=1$, we have\\
		$$\mathcal{J}_{ t}[\tau,\mathcal{X}]=\mathcal{J}_{\mathcal{X}, t}(\tau).$$
		\\Here, $\mathcal{M}_t(a,b)$ is the generalized average of $a$ and $b$ $\in (0, \infty)$, and $\mathcal{M}_t(a,b)$ is different with different values of $t$. \\if $t \in \mathbb {R} \setminus \{0\}$, as follows\\
		$$\mathcal{M}_t(a,b)= \left(\frac{a^t+b^t}{2}\right) ^{\frac{1}{t}}.$$
		\\As we all know, $\mathcal{M}_t(a,b)$ is nondecreasing. In particularly,\\
		$$\mathcal{M}_+\infty(a,b)=\lim_{t \rightarrow +\infty} \mathcal{M}_t(a, b) = \max\{a,b\}.$$
		$$\mathcal{M}_-\infty(a,b)=\lim_{t \rightarrow -\infty} \mathcal{M}_t(a, b) = \min\{a,b\}.$$
		$$\mathcal{M}_0(a,b)=\lim_{t \rightarrow 0} \mathcal{M}_t(a, b) = \sqrt {ab} \text{ }.$$
	\end{Definition}
	Then, as a generalization of the von Neumann-Jordan constant, Takahashi also introduced the von Neumann-Jordan type constant $\mathcal{C}_t(\mathcal{X})$, that is:
	\begin{Definition}
		\cite{2} For $\tau\geq0$ and $-\infty \leq t < +\infty$, the constant $\mathcal{C}_t(\mathcal{X})$ is defined to be\\
		$$\mathcal{C}_t(\mathcal{X})=\sup\left\{\frac{\mathcal{J}_{\mathcal{X}, t}(\tau)^2}{1+\tau^2}:0\leq \tau\leq1\right\}.$$
	\end{Definition}
	Obviously, these theorems contain some known constants, for instance, the James constant $\mathcal{J}(\mathcal{X})=\mathcal{J}_{\mathcal{X}, -\infty}(1)$, the $\mathcal{A}_2$-constant $\mathcal{A}_2(\mathcal{X}) =\mathcal{J}_{\mathcal{X}, 1}(1)$ \cite{7}, the Gao constant  $\mathcal{E}(\mathcal{X})=2\mathcal{J}_{\mathcal{X},2}(1)^{2}$\cite{6}, the Alonso-Llorens-Fuster constant $\mathcal{T}(\mathcal{X}) =\mathcal{J}_{\mathcal{X}, 0}(1)$ \cite{5}, the von Neumann-Jordan constant $\mathcal{C}_{\mathcal{N}\mathcal{J}}(\mathcal{X})=\mathcal{C}_2(\mathcal{X})$, and the $\mathrm{Zbaganu}$ constants $\mathcal{C}_\mathbf{\mathcal{Z}}(\mathcal{X})=\mathcal{C}_0(\mathcal{X}).$ In particular, by taking $t=-\infty$, the constant $\mathcal{C}_{-\infty}(\mathcal{X})$ is obtained, some of whose related results are described in\cite{2, 20, 22}.
	
	\begin{Definition}
		Let $x_1,x_2\in \mathcal{S}_{\mathcal{X}},$ if there exists $\delta>0,$ such that\\ 
		$$\frac{\|x_1+x_2\|}{2}\leq1-\delta \hspace*{0.5em} or \hspace*{0.5em}
		\frac{\|x_1-x_2\|}{2}\leq1-\delta,$$
		\\then the space $\mathcal{X}$ is called Uniformly non-square.
	\end{Definition}

	\section{Main Results}
	First, we give the following results.\\
	We define a new skew constant $\mathcal{J}_{t}[\tau,\mathcal{X}]$ based on the James type constants $\mathcal{J}_{\mathcal{X}, t}(\tau),$ which covers the skew constants $\mathcal{E}[t, \mathcal{X}]$ and $\mathcal{J}[t, \mathcal{X}]$ for $t>0.$

	
	\begin{Definition}
		
		Let $\tau\geq0$ and $-\infty \leq t<\infty$, the skew James type constant $\mathcal{J}_{t}[\tau,\mathcal{X}]$ is defined as\\
		$$
		\left\{\begin{array}{l}
			\sup \left\{\left(\frac{\|x_1+\tau x_2\|^t+\|\tau x_1- x_2\|^t}{2}\right)^{\frac{1}{t}}:\|x_1\|=1,\|x_2\|=1\right\}, \quad-\infty<t \neq 0<\infty, \\
			\sup \{\sqrt{\| x_1+\tau x_2\|\|\tau x_1- x_2\|}:\|x_1\|=1,\|x_2\|=1\}, \quad t=0, \\
			\sup \{\min \{\| x_1+ \tau x_2\|,\|\tau x_1- x_2\|\}:\|x_1\|=1,\|x_2\|=1\}, \quad t=-\infty.
		\end{array}\right.
		$$
	\end{Definition}
	By using the generalized mean, we have
	\begin{Definition}  Let $\tau \geq 0$ and  $-\infty \leq t<\infty$, the skew James type constant can be defined as\\
		$$
		\mathcal{J}_{ t}[\tau,\mathcal{X}]=\sup \left\{\mathcal{\mathcal{M}}_t(\| x_1+ \tau x_2\|,\|\tau x_1-x_2\|): x_1, x_2 \in \mathcal{S}_\mathcal{X}\right\}.
		$$
	\end{Definition}
	
	\begin{Remark} According to the definition of the constant $\mathcal{J}_{t}[\tau,\mathcal{X}]$, we can easily get the following facts:\\
		\\(i)	$\mathcal{J}_{-\infty}[\tau,\mathcal{X}]=\mathcal{J}[t, \mathcal{X}]$ where $\tau, t>0$.\\
		(ii) $2(\mathcal{J}_{2}[\tau,\mathcal{X}])^2=\mathcal{E}[t, \mathcal{X}]$ where $\tau, t\geq0$.\\
	\end{Remark}
	
	\begin{Proposition} Let $\mathcal{X}$ be a Banach space. The following two statements are true:\\
		\\(i) When $\tau \in [0,\infty],$ $\left(\mathcal{J}_t[\tau, \mathcal{X}]\right)^{t}$ is a convex function.\\
		(ii) When $\tau \in [0,\infty],$ $\left(\mathcal{J}_t[\tau, \mathcal{X}]\right)^{t}$ is a continuous function.\\
		
	\end{Proposition}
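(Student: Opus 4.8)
The plan is to fix the exponent $t\in[1,\infty)$ (the range in which the power mean $\mathcal{M}_t$ is, up to the scalar $2^{-1/t}$, the $\ell^t$-norm on $\mathbb{R}^2$ — the structural fact both parts rest on) and to handle both assertions through one family of auxiliary functions. For each pair $x_1,x_2\in\mathcal{S}_\mathcal{X}$ put
$$\psi_{x_1,x_2}(\tau)=\mathcal{M}_t\bigl(\|x_1+\tau x_2\|,\|\tau x_1-x_2\|\bigr)^t=\frac{\|x_1+\tau x_2\|^t+\|\tau x_1-x_2\|^t}{2},$$
so that $\bigl(\mathcal{J}_t[\tau,\mathcal{X}]\bigr)^t=\sup\{\psi_{x_1,x_2}(\tau):x_1,x_2\in\mathcal{S}_\mathcal{X}\}$. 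For (i) it then suffices to prove each $\psi_{x_1,x_2}$ is convex on $[0,\infty)$: a pointwise supremum of a nonempty family of convex functions is convex, and the supremum is finite since $\|x_1+\tau x_2\|\le1+\tau$ gives $\psi_{x_1,x_2}(\tau)\le(1+\tau)^t$, hence $\bigl(\mathcal{J}_t[\tau,\mathcal{X}]\bigr)^t\le(1+\tau)^t<\infty$ for $\tau\in[0,\infty)$.

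To get convexity of $\psi_{x_1,x_2}$, note that $\tau\mapsto x_1+\tau x_2$ and $\tau\mapsto\tau x_1-x_2$ are affine $\mathcal{X}$-valued maps, so $\tau\mapsto\|x_1+\tau x_2\|$ and $\tau\mapsto\|\tau x_1-x_2\|$ are convex; since $t\ge1$ the function $u\mapsto u^t$ is convex and nondecreasing on $[0,\infty)$, and a convex nondecreasing function composed with a convex function is convex, so $\tau\mapsto\|x_1+\tau x_2\|^t$ and $\tau\mapsto\|\tau x_1-x_2\|^t$ are convex, and hence so is their average $\psi_{x_1,x_2}$. This is the one place where $t\ge1$ is genuinely needed, and it is needed: for $0<t<1$ the power $u\mapsto u^t$ is concave and the conclusion fails — e.g. in $(\mathbb{R}^2,\|\cdot\|_\infty)$ one computes $\mathcal{J}_t[\tau,\mathcal{X}]=1+\tau$, whose $t$-th power $(1+\tau)^t$ is not convex. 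I would also record the reflection identity $\mathcal{J}_t[\tau,\mathcal{X}]=\tau\,\mathcal{J}_t[1/\tau,\mathcal{X}]$ (divide the defining expression by $\tau$ and interchange $x_1\leftrightarrow x_2$): it gives $\bigl(\mathcal{J}_t[\tau,\mathcal{X}]\bigr)^t=\tau^t\bigl(\mathcal{J}_t[1/\tau,\mathcal{X}]\bigr)^t\to+\infty$ as $\tau\to\infty$, so at the endpoint $\tau=\infty$ the value is $+\infty$ and the statements of (i)--(ii) are understood, and hold, in the extended-real sense there.

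For (ii) the efficient route is a Lipschitz estimate (stronger than continuity, and independent of (i)). Since for $t\ge1$ the map $\mathcal{M}_t$ is $2^{-1/t}$ times the $\ell^t$-norm on $\mathbb{R}^2$, it is subadditive, $1$-homogeneous and nondecreasing in each variable, so $|\mathcal{M}_t(a_1,b_1)-\mathcal{M}_t(a_2,b_2)|\le\mathcal{M}_t(|a_1-a_2|,|b_1-b_2|)$ and $\mathcal{M}_t(c,c)=c$. Together with $\bigl|\,\|x_1+\tau x_2\|-\|x_1+\tau' x_2\|\,\bigr|\le|\tau-\tau'|$ and the analogous bound for the second term, this yields, for each fixed $x_1,x_2$, that $|\mathcal{M}_t(\|x_1+\tau x_2\|,\|\tau x_1-x_2\|)-\mathcal{M}_t(\|x_1+\tau' x_2\|,\|\tau' x_1-x_2\|)|\le|\tau-\tau'|$; taking the supremum over $x_1,x_2\in\mathcal{S}_\mathcal{X}$ and using $|\sup f-\sup g|\le\sup|f-g|$ gives $|\mathcal{J}_t[\tau,\mathcal{X}]-\mathcal{J}_t[\tau',\mathcal{X}]|\le|\tau-\tau'|$, so $\tau\mapsto\mathcal{J}_t[\tau,\mathcal{X}]$ is $1$-Lipschitz, hence continuous, on $[0,\infty)$. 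Because $\mathcal{J}_t[\tau,\mathcal{X}]\ge\mathcal{M}_t(1+\tau,|1-\tau|)\ge\mathcal{M}_1(1+\tau,|1-\tau|)=\max\{1,\tau\}\ge1$, composing with the continuous map $s\mapsto s^t$ shows $\tau\mapsto\bigl(\mathcal{J}_t[\tau,\mathcal{X}]\bigr)^t$ is continuous on $[0,\infty)$, and continuity up to $\tau=\infty$ follows again from the reflection identity. I expect no computational obstacle; the only real point of care is the hypothesis $t\ge1$, which enters both the convexity-preserving composition in (i) and the subadditivity/Lipschitz argument in (ii), and without which the proposition is false.
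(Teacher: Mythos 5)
Your proof is correct. Part (i) is in substance the same argument as the paper's: both rest on the fact that $\tau\mapsto x_1+\tau x_2$ and $\tau\mapsto\tau x_1-x_2$ are affine, that the norm is convex, and that $u\mapsto u^t$ is convex and nondecreasing for $t\ge1$; you merely package the computation as ``a pointwise supremum of convex functions is convex,'' whereas the paper carries the convex-combination inequality through the supremum by hand. Part (ii) is where you genuinely diverge, to your advantage. The paper deduces continuity from convexity, which only yields continuity on the open interval $(0,\infty)$: a finite convex function on $[0,\infty)$ may jump up at the left endpoint, so the endpoint $\tau=0$ is not covered by that argument. Your direct $1$-Lipschitz estimate for $\tau\mapsto\mathcal{J}_t[\tau,\mathcal{X}]$, obtained from the reverse triangle inequality for the scaled $\ell^t$-norm $\mathcal{M}_t$ and $\bigl|\sup f-\sup g\bigr|\le\sup|f-g|$, gives continuity on all of $[0,\infty)$ uniformly in $\mathcal{X}$, which is both stronger and independent of (i). Two further points you make are worth keeping: the explicit observation that the hypothesis $t\ge1$ is indispensable (the proposition is stated for all $-\infty\le t<\infty$, but for $0<t<1$ your $(\mathbb{R}^2,\|\cdot\|_\infty)$ example with $\mathcal{J}_t[\tau,\mathcal{X}]=1+\tau$ shows convexity fails), and the reflection identity $\mathcal{J}_t[\tau,\mathcal{X}]=\tau\,\mathcal{J}_t[1/\tau,\mathcal{X}]$, which is what one actually needs to make sense of the endpoint $\tau=\infty$ that the statement formally includes. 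The paper addresses neither.
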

	\begin{proof} 
		(i) Let $\tau_1, \tau_2 \in [0,\infty], \lambda \in(0,1)$. Then, for any $x_1, x_2 \in \mathcal{S}_\mathcal{X}$, we can derive that\\
		$$
		\begin{aligned}
			& \frac{\|x_1+\left(\lambda \tau_1+(1-\lambda) \tau_2\right)x_2\|^t+\|\left(\lambda \tau_1+(1-\lambda)\tau_2\right) x_1-x_2\|^t}{2}\\
			&\leq \frac{ \left(\lambda\left\|x_1+\tau_1 x_2\right\|+(1-\lambda)\left\|x_1+\tau_2 x_2\right\|\right)^t}{2} \\
			&+\frac{\left(\lambda\left\|\tau_1 x_1-x_2\right\|+(1-\lambda)\left\|\tau_2 x_1-x_2\right\|\right)^t}{2} \\
			&\leq \frac{\lambda\left(\left\|x_1+\tau_1 x_2\right\|^t+\left\|\tau_1 x_1-x_2\right\|^t\right)}{2}+\frac{(1-\lambda)\left(\left\|x_1+\tau_2 x_2\right\|^t+\left\|\tau_2 x_1-x_2\right\|^t\right)}{2} \\
			&\leq \lambda \mathcal{J}_t^{t} [\tau_1, \mathcal{X}]+(1-\lambda) \mathcal{J}_t^{t} [\tau_2, \mathcal{X}],
		\end{aligned}
		$$
		\\which implies that\\
		$$
		\left(\mathcal{J}_t\left[\lambda \tau_1+(1-\lambda) \tau_2, \mathcal{X}\right]\right)^{t} \leq \lambda \left(\mathcal{J}_t[\tau_1, \mathcal{X}]\right)^{t}+(1-\lambda) \left(\mathcal{J}_t[\tau_2, \mathcal{X}]\right)^{t} .
		$$
		\\(ii) By (i), we can obtain $\left(\mathcal{J}_t[\tau, \mathcal{X}]\right)^{t}$ is continuous on $\mathbb{R}$.
	\end{proof}
	Subsequently‌, we give the equivalent forms of the constant $\mathcal{J}_t [\tau, \mathcal{X}]$ for $t\geq1$.\\
	\begin{Proposition}\label{1}
		Let $\mathcal{X}$ be a Banach space and $t\geq1$. Then\\
		\\(i) $\mathcal{J}_t [\tau, \mathcal{X}]=\sup \left\{\left(\frac{\|x_1+\tau x_2\|^t+\|\tau x_1- x_2\|^t}{2}\right)^{\frac{1}{t}}:x_1,x_2\in \mathcal{B}_\mathcal{X}\right\}.$\\
		(ii) If $\mathcal{X}$ is a reflexive Banach space, then\\
		$$
		\mathcal{J}_t [\tau, \mathcal{X}]=\sup \left\{\left(\frac{\|x_1+\tau x_2\|^t+\|\tau x_1- x_2\|^t}{2}\right)^{\frac{1}{t}}:x_1,x_2\in \operatorname{ext}\left(\mathcal{B}_\mathcal{X}\right)\right\}.
		$$
		\\(iii) $\mathcal{J}_t [\tau, \mathcal{X}]=\sup \left\{\mathcal{J}_t [\tau, \mathcal{X}_{0}]: \mathcal{X}_0 \subset \mathcal{X}, \operatorname{dim}\left(\mathcal{X}_0\right)=2\right\}$.
	\end{Proposition}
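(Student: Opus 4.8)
My plan is to prove the three identities separately, with a single convexity observation doing most of the work. For $t\ge 1$ the map $s\mapsto s^{t}$ is convex and nondecreasing on $[0,\infty)$; writing $g(x_1,x_2)=\frac12\bigl(\|x_1+\tau x_2\|^{t}+\|\tau x_1-x_2\|^{t}\bigr)$, this makes $x_1\mapsto g(x_1,x_2)$ convex for each fixed $x_2$ and $x_2\mapsto g(x_1,x_2)$ convex for each fixed $x_1$ (each summand is a nondecreasing convex function composed with a norm), and $g$ is plainly norm-continuous on $\mathcal{X}\times\mathcal{X}$. Since the expression inside every supremum equals $g(x_1,x_2)^{1/t}$ and $s\mapsto s^{1/t}$ is increasing, in each part it suffices to compare the corresponding suprema of $g$.

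For (i), the inequality ``$\le$'' is immediate from $\mathcal{S}_\mathcal{X}\subseteq\mathcal{B}_\mathcal{X}$. For ``$\ge$'', given $x_1,x_2\in\mathcal{B}_\mathcal{X}$ I would exhibit $x_1$ as a convex combination of two unit vectors: if $x_1\neq 0$ write $x_1=\frac{1+\|x_1\|}{2}\cdot\frac{x_1}{\|x_1\|}+\frac{1-\|x_1\|}{2}\cdot\bigl(-\frac{x_1}{\|x_1\|}\bigr)$, and if $x_1=0$ write $x_1=\frac12 u+\frac12(-u)$ with $u\in\mathcal{S}_\mathcal{X}$ arbitrary. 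Convexity of $g$ in the first slot then gives $g(x_1,x_2)\le\max\{g(y_1,x_2),g(y_1',x_2)\}$ for the two unit vectors involved, so some $y_1\in\mathcal{S}_\mathcal{X}$ satisfies $g(y_1,x_2)\ge g(x_1,x_2)$; repeating the argument in the second slot produces $y_2\in\mathcal{S}_\mathcal{X}$ with $g(y_1,y_2)\ge g(x_1,x_2)$. Taking suprema over $\mathcal{B}_\mathcal{X}\times\mathcal{B}_\mathcal{X}$ proves the claim.

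Part (iii) is essentially bookkeeping. For any two-dimensional $\mathcal{X}_0\subseteq\mathcal{X}$ the norm of $\mathcal{X}_0$ is the restriction of that of $\mathcal{X}$, so $\mathcal{S}_{\mathcal{X}_0}\subseteq\mathcal{S}_\mathcal{X}$ and $\mathcal{J}_t[\tau,\mathcal{X}_0]\le\mathcal{J}_t[\tau,\mathcal{X}]$, which gives ``$\le$''. Conversely, given $x_1,x_2\in\mathcal{S}_\mathcal{X}$ I would choose a two-dimensional subspace $\mathcal{X}_0$ containing both of them — take $\operatorname{span}\{x_1,x_2\}$ if these are independent, and an enlargement of it inside $\mathcal{X}$ otherwise, which exists since $\dim\mathcal{X}\ge 2$ — so that $\mathcal{M}_t(\|x_1+\tau x_2\|,\|\tau x_1-x_2\|)\le\mathcal{J}_t[\tau,\mathcal{X}_0]$, and taking the supremum over $x_1,x_2$ completes it.

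Part (ii) is the one that needs real input, and I expect it to be the main obstacle. Reflexivity makes $\mathcal{B}_\mathcal{X}$ weakly compact, so by the Krein--Milman theorem it is the weak closure of $\operatorname{conv}(\operatorname{ext}\mathcal{B}_\mathcal{X})$, and by Mazur's theorem the weak and norm closures of this convex set coincide; hence $\mathcal{B}_\mathcal{X}=\overline{\operatorname{conv}}^{\|\cdot\|}(\operatorname{ext}\mathcal{B}_\mathcal{X})$. Fixing $\varepsilon>0$ I use (i) to pick $x_1,x_2\in\mathcal{B}_\mathcal{X}$ with $g(x_1,x_2)^{1/t}>\mathcal{J}_t[\tau,\mathcal{X}]-\varepsilon$, approximate $x_1$ and $x_2$ in norm by finite convex combinations $u=\sum_i\lambda_i v_i$ and $w=\sum_j\mu_j z_j$ of extreme points, and invoke norm-continuity of $g$ to keep $g(u,w)^{1/t}>\mathcal{J}_t[\tau,\mathcal{X}]-\varepsilon$. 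Convexity of $g$ in the first slot gives $g(u,w)\le\max_i g(v_i,w)$, so $g(v,w)\ge g(u,w)$ for some extreme point $v$, and convexity in the second slot gives $g(v,z)\ge g(v,w)$ for some extreme point $z$; thus $g(v,z)^{1/t}>\mathcal{J}_t[\tau,\mathcal{X}]-\varepsilon$, and letting $\varepsilon\to 0$, together with the trivial reverse inequality, yields the identity. The delicate point is exactly this reduction: convexity alone would already push toward extreme points in finite dimensions, but in the infinite-dimensional reflexive case one must first descend to finite convex combinations via Krein--Milman and Mazur and then absorb the approximation error through the continuity of $g$ — this is where reflexivity is genuinely used.
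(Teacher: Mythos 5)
Your proof is correct and follows essentially the same route as the paper: part (i) uses the same decomposition of a ball point as a convex combination of $\pm x/\|x\|$, part (ii) the same Krein--Milman reduction to finite convex combinations of extreme points followed by the convexity argument, and part (iii) the same two-dimensional subspace selection. Your packaging of the computation as separate convexity of $g$ in each slot (plus the explicit appeal to Mazur's theorem and the treatment of $x_1=0$) is a slightly cleaner and more careful presentation of the identical argument.
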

	\begin{proof} (i) Actually, we just have to prove that\\ 
		$$\begin{aligned}&\sup \left\{\|x_1+\tau x_2\|^t+\|\tau x_1- x_2\|^t:x_1,x_2\in \mathcal{B}_\mathcal{X}\right\}\\
			&= \sup \left\{\|x_1+\tau x_2\|^t+\|\tau x_1- x_2\|^t:x_1,x_2\in \mathcal{S}_\mathcal{X}\right\}.\end{aligned}$$
		\\First, for any $x_1, x_2 \in \mathcal{B}_\mathcal{X}$, we have\\
		$$
		\begin{aligned}
			& x_1=\frac{1-\|x_1\|}{2}\left(-\frac{x_1}{\|x_1\|}\right)+\left(1-\frac{1-\|x_1\|}{2}\right) \frac{x_1}{\|x_1\|}, \\
			& x_2=\frac{1-\|x_2\|}{2}\left(-\frac{x_2}{\|x_2\|}\right)+\left(1-\frac{1-\|x_2\|}{2}\right) \frac{x_2}{\|x_2\|} .
		\end{aligned}
		$$
		\\As a matter of convenience, we denote $\frac{1-\|x_1\|}{2}$, $1-\frac{1-\|x_1\|}{2}$, $\frac{1-\|x_2\|}{2}$, $1-\frac{1-\|x_2\|}{2}$, $-\frac{x_1}{\|x_1\|}$, $\frac{x_1}{\|x_1\|}$, $-\frac{x_2}{\|x_2\|}$, $\frac{x_2}{\|x_2\|}$ by $m_1, m_2, n_1, n_2, x_{11}, x_{12}, x_{21}, x_{22}$, respectively. Thus\\
		$$x_1=m_1x_{11}+m_2x_{12},x_2=n_1x_{21}+n_2x_{22}.$$
		\\It is obvious that $x_{11},x_{12},x_{21},x_{22}\in \mathcal{S}_\mathcal{X}$ and $m_1,m_2,n_1,n_2\in [0,1]$ such that $m_1+m_2=1$ and $n_1+n_2=1$.
		Let $t\geq1$. Since $f(x)=x^t$ is a convex function on $[0, \infty)$, for any $x_1, x_2 \in \mathcal{B}_\mathcal{X}$, we have\\
		$$
		\begin{aligned}
			&\|x_1+\tau x_2\|^t+\|\tau x_1- x_2\|^t\\
			&=\left\|\sum_{\phi=1}^2 m_\phi x_{1\phi}+\tau\left(\sum_{\phi=1}^2 m_\phi x_2\right)\right\|^t+\left\|\tau\left(\sum_{\phi=1}^2 m_\phi x_{1\phi}\right)-\left(\sum_{\phi=1}^2 m_\phi x_2\right)\right\|^t \\
			&=\left\|\sum_{\phi=1}^2 m_\phi\left(x_{1\phi}+\tau x_2\right)\right\|^t+\left\|\sum_{\phi=1}^2 m_\phi\left(\tau x_{1\phi}-x_2\right)\right\|^t \\
			& \leq\left(\sum_{\phi=1}^2 m_\phi\left\|x_{1\phi}+\tau x_2\right\|\right)^t+\left(\sum_{\phi=1}^2 m_\phi\left\|\tau x_{1\phi}-x_2\right\|\right)^t \\
			& \leq \sum_{\phi=1}^2 m_\phi\left\|x_{1\phi}+\tau x_2\right\|^t+\sum_{\phi=1}^2 m_\phi\left\|\tau x_{1\phi}-x_2\right\|^t \\
			&=\sum_{\phi=1}^2 m_\phi\left\|\sum_{\psi=1}^2 n_\psi x_{1\phi}+\tau\left(\sum_{\psi=1}^2 n_\psi x_{2\psi}\right)\right\|^t\\
			&+\sum_{\phi=1}^2 m_\phi\left\|\tau\left(\sum_{\psi=1}^2 n_\psi x_{1\phi}\right)-\left(\sum_{\psi=1}^2 n_\psi x_{2\psi}\right)\right\|^t \\
		\end{aligned}$$
		$$\begin{aligned}
			&=\sum_{\phi=1}^2 m_\phi\left\|\sum_{\psi=1}^2 n_\psi\left(x_{1\phi}+\tau x_{2\psi}\right)\right\|^t+\sum_{\phi=1}^2 m_\phi\left\|\sum_{\psi=1}^2 n_\psi\left(\tau x_{1\phi}-x_{2\psi}\right)\right\|^t \\
			& \leq \sum_{\phi=1}^2 m_\phi\left(\sum_{\psi=1}^2 n_\psi\left\|x_{1\phi}+\tau x_{2\psi}\right\|\right)^t+\sum_{\phi=1}^2 m_\phi\left(\sum_{\psi=1}^2 n_\psi\left\|\tau x_{1\phi}-x_{2\psi}\right\|\right)^t\\
			& \leq \sum_{\phi=1}^2 m_\phi\sum_{\psi=1}^2 n_\psi\left\|x_{1\phi}+\tau x_{2\psi}\right\|^t+ \sum_{\phi=1}^2 m_\phi\sum_{\psi=1}^2 n_\psi\left\|\tau x_{1\phi}-x_{2\psi}\right\|^t\\
			&= \sum_{\phi=1}^2 m_\phi\sum_{\psi=1}^2 n_\psi\left(\left\|x_{1\phi}+\tau x_{2\psi}\right\|^t+\left\|\tau x_{1\phi}-x_{2\psi}\right\|^t\right)\\
			&\leq\max\{\left\|x_{1\phi}+\tau x_{2\psi}\right\|^t+\left\|\tau x_{1\phi}-x_{2\psi}\right\|^t:\phi=1,2,\psi=1,2\}\\
			&\leq\sup\{\left\|x_1+\tau x_2\right\|^t+\left\|\tau x_1-x_2\right\|^t:x_1,x_2\in \mathcal{S}_\mathcal{X}\}.
		\end{aligned}
		$$
		\\This shows that\\
		$$\mathcal{J}_t [\tau, \mathcal{X}]\geq\sup \left\{\left(\frac{\|x_1+\tau x_2\|^t+\|\tau x_1- x_2\|^t}{2}\right)^{\frac{1}{t}}:x_1,x_2\in \mathcal{B}_\mathcal{X}\right\}.$$
		\\In addition, it is obvious that\\
		$$\mathcal{J}_t [\tau, \mathcal{X}]\leq\sup \left\{\left(\frac{\|x_1+\tau x_2\|^t+\|\tau x_1- x_2\|^t}{2}\right)^{\frac{1}{t}}:x_1,x_2\in \mathcal{B}_\mathcal{X}\right\}.$$
		\\(ii) Again, we just have to prove that\\
		$$\begin{aligned}&\sup \left\{\|x_1+\tau x_2\|^t+\|\tau x_1- x_2\|^t: x_1,x_2\in \operatorname{ext}\left(\mathcal{B}_\mathcal{X}\right)\right\}\\
			&=\sup \left\{\|x_1+\tau x_2\|^t+\|\tau x_1- x_2\|^t:x_1,x_2\in \mathcal{S}_\mathcal{X}\right\}.
		\end{aligned}$$
		\\Since $\mathcal{X}$ is a reflexive Banach space, then, according to Krein-Milman theorem, we know that $\mathcal{B}_\mathcal{X}=\overline{\operatorname{co}}\left(\operatorname{ext}\left(\mathcal{B}_\mathcal{X}\right)\right)$. Then, by (i), we obtain\\
		$$
		\mathcal{J}_t [\tau, \mathcal{X}]=\sup \left\{\left(\frac{\|x_1+\tau x_2\|^t+\|\tau x_1- x_2\|^t}{2}\right)^{\frac{1}{t}}: x_1, x_2 \in \overline{\operatorname{co}}\left(\operatorname{ext}\left(\mathcal{B}_\mathcal{X}\right)\right)\right\}.
		$$
		\\Further, according to the continuity of norm, it is easy to know\\
		$$
		\mathcal{J}_t [\tau, \mathcal{X}]=\sup \left\{\left(\frac{\|x_1+\tau x_2\|^t+\|\tau x_1- x_2\|^t}{2}\right)^{\frac{1}{t}}: x_1, x_2 \in \operatorname{co}\left(\operatorname{ext}\left(\mathcal{B}_\mathcal{X}\right)\right)\right\}.
		$$
		\\Next, for any $x_1, x_2 \in \operatorname{co}\left(\operatorname{ext}\left(\mathcal{B}_\mathcal{X}\right)\right)$, there exist $\left\{x_{1\phi}\right\}_{\phi=1}^{n_{x_1}},\left\{x_{2\psi}\right\}_{\psi=1}^{n_{x_2}} \in \operatorname{ext}\left(\mathcal{B}_\mathcal{X}\right)$ and $\left\{m_\phi\right\}_{\phi=1}^{n_{x_1}},\left\{n_\psi\right\}_{\psi=1}^{n_{x_2}} \in[0,1]$ such that\\
		$$
		x_1=\sum_{\phi=1}^{n_{x_1}} m_\phi x_{1\phi}, \quad x_2=\sum_{\psi=1}^{n_{x_2}} n_\psi x_{2\psi}, \quad \sum_{\phi=1}^{n_{x_1}} m_\phi=1, \quad \sum_{\psi=1}^{n_{x_2}} n_\psi=1 .
		$$
		\\Moreover, since $f(x)=x^t$ is a convex function on $[0, \infty)$, consequently, for any $x_1, x_2 \in \operatorname{co}\left(\operatorname{ext}\left(\mathcal{B}_\mathcal{X}\right)\right)$, we obtain\\
		$$
		\begin{aligned}
			& \|x_1+\tau x_2\|^t+\|\tau x_1-x_2\|^t \\
			&= \left\|\sum_{\phi=1}^{n_{x_1}} m_\phi x_{1\phi}+\tau\left(\sum_{\phi=1}^{n_{x_1}} m_\phi x_2\right)\right\|^t+\left\|\tau\left(\sum_{\phi=1}^{n_{x_1}} m_\phi x_{1\phi}\right)-\left(\sum_{\phi=1}^{n_{x_1}} m_\phi x_2\right)\right\|^t \\
			&=\left\|\sum_{\phi=1}^{n_{x_1}} m_\phi\left(x_{1\phi}+\tau x_2\right)\right\|^t+\left\|\sum_{\phi=1}^{n_{x_1}} m_\phi\left(\tau x_{1\phi}-x_2\right)\right\|^t \\
			&\leq  \left(\sum_{\phi=1}^{n_{x_1}} m_\phi\left\|x_{1\phi}+\tau x_2\right\|\right)^t+\left(\sum_{\phi=1}^{n_{x_1}} m_\phi\left\|\tau x_{1\phi}-x_2\right\|\right)^t \\
			&\leq  \sum_{\phi=1}^{n_{x_1}} m_\phi\left\|x_{1\phi}+\tau x_2\right\|^t+\sum_{\phi=1}^{n_{x_1}} m_\phi\left\|\tau x_{1\phi}-x_2\right\|^t\\
			&=\sum_{\phi=1}^{n_{x_1}} m_\phi\left\|\sum_{\psi=1}^{n_{x_2}} n_\psi x_{1\phi}+\tau\left(\sum_{\psi=1}^{n_{x_2}} n_\psi x_{2\psi}\right)\right\|^t\\
			&+\sum_{\phi=1}^{n_{x_1}} m_\phi\left\|\tau\left(\sum_{\psi=1}^{n_{x_2}} n_\psi x_{1\phi}\right)-\left(\sum_{\psi=1}^{n_{x_2}} n_\psi x_{2\psi}\right)\right\|^t \\
			&=\sum_{\phi=1}^{n_{x_1}} m_\phi\left\|\sum_{\psi=1}^{n_{x_2}} n_\psi\left(x_{1\phi}+\tau x_{2\psi}\right)\right\|^t+\sum_{\phi=1}^{n_{x_1}} m_\phi\left\|\sum_{\psi=1}^{n_{x_2}} n_\psi\left(\tau x_{1\phi}-x_{2\psi}\right)\right\|^t \\
			& \leq \sum_{\phi=1}^{n_{x_1}} m_\phi\left(\sum_{\psi=1}^{n_{x_2}} n_\psi\left\|x_{1\phi}+\tau x_{2\psi}\right\|\right)^t+\sum_{\phi=1}^{n_{x_1}} m_\phi\left(\sum_{\psi=1}^{n_{x_2}} n_\psi\left\|\tau x_{1\phi}-x_{2\psi}\right\|\right)^t\\
			& \leq \sum_{\phi=1}^{n_{x_1}} m_\phi\sum_{\psi=1}^{n_{x_2}} n_\psi\left\|x_{1\phi}+\tau x_{2\psi}\right\|^t+ \sum_{\phi=1}^{n_{x_1}} m_\phi\sum_{\psi=1}^{n_{x_2}} n_\psi\left\|\tau x_{1\phi}-x_{2\psi}\right\|^t\\
			&= \sum_{\phi=1}^{n_{x_1}} m_\phi\sum_{\psi=1}^{n_{x_2}} n_\psi\left(\left\|x_{1\phi}+\tau x_{2\psi}\right\|^t+\left\|\tau x_{1\phi}-x_{2\psi}\right\|^t\right)\\
			&\leq\max\{\left\|x_{1\phi}+\tau x_{2\psi}\right\|^t+\left\|\tau x_{1\phi}-x_{2\psi}\right\|^t:\phi=1,2,...,n_{x_1},\psi=1,2,...,n_{x_2}\}\\
			&\leq\sup\{\left\|x_1+\tau x_2\right\|^t+\left\|\tau x_1-x_2\right\|^t:x_1,x_2\in \operatorname{ext}\left(\mathcal{B}_\mathcal{X}\right)\}.
		\end{aligned}
		$$
		\\Hence, we have\\
		$$\mathcal{J}_t [\tau, \mathcal{X}]\leq\sup \left\{\left(\frac{\|x_1+\tau x_2\|^t+\|\tau x_1- x_2\|^t}{2}\right)^{\frac{1}{t}}:x_1,x_2\in \operatorname{ext}\left(\mathcal{B}_\mathcal{X}\right)\right\}.$$
		\\In addition, it is obvious that\\
		$$\mathcal{J}_t [\tau, \mathcal{X}]\geq\sup \left\{\left(\frac{\|x_1+\tau x_2\|^t+\|\tau x_1- x_2\|^t}{2}\right)^{\frac{1}{t}}:x_1,x_2\in \operatorname{ext}\left(\mathcal{B}_\mathcal{X}\right)\right\}.$$
		\\(iii) On the one hand, it is quite clear that\\
		$$\mathcal{J}_t [\tau, \mathcal{X}]\geq\sup \left\{\mathcal{J}_t [\tau, \mathcal{X}_{0}]: \mathcal{X}_0 \subset \mathcal{X}, \operatorname{dim}\left(\mathcal{X}_0\right)=2\right\}.$$
		\\On the other hand, by the definition of the constant $\mathcal{J}_t [\tau, \mathcal{X}]$, for any $\varepsilon\geq0$,
		there exist $x_1, x_2 \in \mathcal{S}_\mathcal{X}$ such that\\
		$$\mathcal{J}_t [\tau, \mathcal{X}]-\varepsilon\leq \left(\frac{\|x_1+\tau x_2\|^t+\|\tau x_1- x_2\|^t}{2}\right)^{\frac{1}{t}}.$$
		\\Let $\mathcal{X}_0$ be a two-dimensional subspace of $\mathcal{X}$ that incorporates $x_1$ and $x_2$, we have\\
		$$
		\begin{aligned}\mathcal{J}_t [\tau, \mathcal{X}]-\varepsilon&\leq \left(\frac{\|x_1+\tau x_2\|^t+\|\tau x_1- x_2\|^t}{2}\right)^{\frac{1}{t}}\\
			&\leq \mathcal{J}_t [\tau, \mathcal{X}_0] \leq\sup \left\{\mathcal{J}_t [\tau, \mathcal{X}_{0}]: \mathcal{X}_0 \subset \mathcal{X}, \operatorname{dim}\left(\mathcal{X}_0\right)=2\right\}.\end{aligned}$$
		\\Let $\varepsilon \rightarrow 0$, we can obtain\\
		$$\mathcal{J}_t [\tau, \mathcal{X}]\leq\sup \left\{\mathcal{J}_t [\tau, \mathcal{X}_{0}]: \mathcal{X}_0 \subset \mathcal{X}, \operatorname{dim}\left(\mathcal{X}_0\right)=2\right\}.
		$$
		\\The proof is complete.
	\end{proof}
	
	\begin{Example}
		
		Let $\mathcal{X}$ be the space $\mathbb{R}^2$ with the norm defined by\\
		$$
		\left\|\left(x_1, x_2\right)\right\|=\max \left\{\left|x_1+\frac{1}{\sqrt{3}} x_2\right|,\left|x_1-\frac{1}{\sqrt{3}} x_2\right|, \frac{2}{\sqrt{3}}\left|x_2\right|\right\} .
		$$
		\\Then\\
		$$
		\mathcal{J}_t [\tau, \mathcal{X}]= \begin{cases}\left(\frac{(\tau+1)^t+\tau^t}{2}\right)^{\frac{1}{t}},& \tau \geq 1, \\ \left(\frac{(\tau+1)^t+1}{2}\right)^{\frac{1}{t}}, & 0 \leq \tau\leq1 .\end{cases}\\
		$$
	\end{Example}
	\begin{proof} First of all, notice that the unit ball in this norm is a regular hexagon \cite{11}.
		From the above figure, and by doing some simple and careful calculations, we can get that\\
		$$
		\operatorname{ext}\left(\mathcal{B}_\mathcal{X}\right)=\left\{(1,0),\left(\frac{1}{2}, \frac{\sqrt{3}}{2}\right),\left(-\frac{1}{2}, \frac{\sqrt{3}}{2}\right),(-1,0),\left(-\frac{1}{2},-\frac{\sqrt{3}}{2}\right),\left(\frac{1}{2},-\frac{\sqrt{3}}{2}\right)\right\} .
		$$
		\\Now, on account of finite-dimensional Banach spaces must be reflexive spaces, depending on Proposition \ref{1} (ii), we can get\\
		$$
		\mathcal{J}_t [\tau, \mathcal{X}]=\sup \left\{\left(\frac{\|x_1+\tau x_2\|^t+\|\tau x_1- x_2\|^t}{2}\right)^{\frac{1}{t}}:x_1,x_2\in \operatorname{ext}\left(\mathcal{B}_\mathcal{X}\right)\right\}, t\geq1.
		$$
		\\Therefore, through some simple and meticulous calculations, we are not difficult to get\\
		$$
		\mathcal{J}_t [\tau, \mathcal{X}]= \begin{cases}\left(\frac{(\tau+1)^t+\tau^t}{2}\right)^{\frac{1}{t}},& \tau \geq 1, \\ \left(\frac{(\tau+1)^t+1}{2}\right)^{\frac{1}{t}}, & 0 \leq \tau\leq1 .\end{cases}
		$$
	\end{proof}
	
	\begin{Example} Let $\mathcal{X}$ be the space $\mathbb{R}^2$ with the norm defined by\\	$$
		\left\|\left(x_1, x_2\right)\right\|= \begin{cases}\left\|\left(x_1, x_2\right)\right\|_1 & \left(x_1 x_2 \leq 0\right), \\ \left\|\left(x_1, x_2\right)\right\|_{\infty} & \left(x_1 x_2 \geq 0\right) .\end{cases}
		$$
		\\Then\\
		$$
		\mathcal{J}_t [\tau, \mathcal{X}]= \begin{cases}\left(\frac{(\tau+1)^t+\tau^t}{2}\right)^{\frac{1}{t}},& \tau \geq 1, \\ \left(\frac{(\tau+1)^t+1}{2}\right)^{\frac{1}{t}}, & 0 \leq \tau\leq1 .\end{cases}
		$$
	\end{Example}
	\begin{proof}
		First of all, on account of finite-dimensional Banach spaces must be reflexive spaces, depending on Proposition \ref{1} (ii), we can get\\
		$$
		\mathcal{J}_t [\tau, \mathcal{X}]=\sup \left\{\left(\frac{\|x_1+\tau x_2\|^t+\|\tau x_1- x_2\|^t}{2}\right)^{\frac{1}{t}}:x_1,x_2\in \operatorname{ext}\left(\mathcal{B}_\mathcal{X}\right)\right\}, t\geq1.
		$$
		\\Now, since\\
		$$
		\operatorname{ext}\left(\mathcal{B}_\mathcal{X}\right)=\left\{(1,0),(1,1), (0,1),(-1,0),(-1,-1), (0,-1)\right\} .
		$$
		\\Therefore, through some simple and meticulous calculations, we are not difficult to get\\
		$$
		\mathcal{J}_t [\tau, \mathcal{X}]= \begin{cases}\left(\frac{(\tau+1)^t+\tau^t}{2}\right)^{\frac{1}{t}},& \tau \geq 1, \\ \left(\frac{(\tau+1)^t+1}{2}\right)^{\frac{1}{t}}, & 0 \leq \tau\leq1 .\end{cases}
		$$
	\end{proof}

	\begin{Theorem} Let $t_2 \geq t_1 \geq 1$ and $0 \leq \tau \leq 1$. In that way, for any Banach space $\mathcal{X}$,\\
		$$
		\left(\mathcal{J}_{t_1}[\tau,\mathcal{X}]\right)^{t_2} \leq \left(\mathcal{J}_{t_2}[\tau,\mathcal{X}]\right)^{t_2}\leq \frac{(1+\tau)^{t_2}+\left[2 \left(\mathcal{J}_{t_1}[\tau,\mathcal{X}]\right)^{t_1}-(1+\tau)^{t_1}\right]^{\frac{t_2}{t_1}}}{2} .
		$$
		\\In essence, we have that $\mathcal{J}_{ t_1}[\tau,\mathcal{X}]=1+\tau$ if and only if $\mathcal{J}_{ t_2}[\tau,\mathcal{X}]=1+\tau$.
	\end{Theorem}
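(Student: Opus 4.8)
The plan is to split the displayed double inequality into its two halves and then deduce the ``in essence'' equivalence from the right-hand bound.

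The left-hand inequality $\left(\mathcal{J}_{t_1}[\tau,\mathcal{X}]\right)^{t_2}\le\left(\mathcal{J}_{t_2}[\tau,\mathcal{X}]\right)^{t_2}$ is essentially a formality: since $t_2\ge t_1$ and the generalized mean is nondecreasing in its parameter, $\mathcal{M}_{t_1}(a,b)\le\mathcal{M}_{t_2}(a,b)$ for all $a,b>0$, hence $\mathcal{J}_{t_1}[\tau,\mathcal{X}]\le\mathcal{J}_{t_2}[\tau,\mathcal{X}]$; both are nonnegative and $t_2>0$, so raising to the $t_2$-th power preserves the inequality.

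For the right-hand inequality I would fix $x_1,x_2\in\mathcal{S}_\mathcal{X}$ and set $a=\|x_1+\tau x_2\|$, $b=\|\tau x_1-x_2\|$. Two facts drive the estimate: the triangle inequality together with $0\le\tau\le1$ gives $a\le 1+\tau$ and $b\le 1+\tau$; and, by definition of $\mathcal{J}_{t_1}[\tau,\mathcal{X}]$, $a^{t_1}+b^{t_1}\le 2\left(\mathcal{J}_{t_1}[\tau,\mathcal{X}]\right)^{t_1}$. Writing $p=t_2/t_1\ge 1$, $u=a^{t_1}$, $v=b^{t_1}$, $P=(1+\tau)^{t_1}$ and $2M=2\left(\mathcal{J}_{t_1}[\tau,\mathcal{X}]\right)^{t_1}$, the task reduces to bounding $u^{p}+v^{p}$ over $0\le u,v\le P$ with $u+v\le 2M$. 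Because $s\mapsto s^{p}$ is convex and increasing on $[0,\infty)$, assuming $u\ge v$ one has $u^{p}+v^{p}\le P^{p}+(u+v-P)^{p}$ when $u+v\ge P$ (a two-point majorization inequality, since the pair $(P,u+v-P)$ majorizes $(u,v)$), and $u^{p}+v^{p}\le(u+v)^{p}\le P^{p}$ when $u+v\le P$ by superadditivity of $s\mapsto s^p$. In either case, $u+v\le 2M$ and monotonicity give $u^{p}+v^{p}\le P^{p}+(2M-P)^{p}$. Translating back, $a^{t_2}+b^{t_2}\le(1+\tau)^{t_2}+\left[2\left(\mathcal{J}_{t_1}[\tau,\mathcal{X}]\right)^{t_1}-(1+\tau)^{t_1}\right]^{t_2/t_1}$; halving and taking the supremum over $x_1,x_2\in\mathcal{S}_\mathcal{X}$ yields the bound for $\left(\mathcal{J}_{t_2}[\tau,\mathcal{X}]\right)^{t_2}$. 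The main point needing care — and the only real obstacle — is legitimizing the fractional power, i.e.\ checking $2M-P\ge 0$, equivalently $2\left(\mathcal{J}_{t_1}[\tau,\mathcal{X}]\right)^{t_1}\ge(1+\tau)^{t_1}$. This follows by feeding $x_1=x_2\in\mathcal{S}_\mathcal{X}$ into the definition, where $\mathcal{M}_{t_1}(1+\tau,1-\tau)\le\mathcal{J}_{t_1}[\tau,\mathcal{X}]$, so $2\left(\mathcal{J}_{t_1}[\tau,\mathcal{X}]\right)^{t_1}\ge(1+\tau)^{t_1}+(1-\tau)^{t_1}\ge(1+\tau)^{t_1}$.

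For the equivalence I would argue both directions from the right-hand bound. If $\mathcal{J}_{t_1}[\tau,\mathcal{X}]=1+\tau$, the bracket collapses to $(1+\tau)^{t_1}$, so $\left(\mathcal{J}_{t_2}[\tau,\mathcal{X}]\right)^{t_2}\le(1+\tau)^{t_2}$; together with $\mathcal{J}_{t_2}[\tau,\mathcal{X}]\ge\mathcal{J}_{t_1}[\tau,\mathcal{X}]=1+\tau$ from the left-hand inequality, equality follows. Conversely, if $\mathcal{J}_{t_2}[\tau,\mathcal{X}]=1+\tau$, the right-hand bound rearranges to $(1+\tau)^{t_2}\le\left[2\left(\mathcal{J}_{t_1}[\tau,\mathcal{X}]\right)^{t_1}-(1+\tau)^{t_1}\right]^{t_2/t_1}$; raising both sides to the power $t_1/t_2$ and simplifying gives $(1+\tau)^{t_1}\le\left(\mathcal{J}_{t_1}[\tau,\mathcal{X}]\right)^{t_1}$, hence $\mathcal{J}_{t_1}[\tau,\mathcal{X}]\ge 1+\tau$, and since one always has $\mathcal{J}_{t_1}[\tau,\mathcal{X}]\le 1+\tau$ this forces equality. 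I expect the bulk of the written proof to be the boundary/majorization optimization in the middle; the remaining steps are bookkeeping.
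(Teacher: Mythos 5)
Your proposal is correct, and the two halves you treat as routine (the power-mean monotonicity for the left inequality, and the endpoint/monotonicity bookkeeping for the equivalence) match what the paper does or leaves implicit. Where you genuinely diverge is the central estimate. The paper parametrizes by $v=\max\{\|x_1+\tau x_2\|,\|\tau x_1-x_2\|\}$, bounds the minimum via $\min^{t_1}\leq 2\left(\mathcal{J}_{t_1}[\tau,\mathcal{X}]\right)^{t_1}-v^{t_1}$, and then maximizes the one-variable function $h(v)=v^{t_2}+[2(\mathcal{J}_{t_1}[\tau,\mathcal{X}])^{t_1}-v^{t_1}]^{t_2/t_1}$ by differentiating: $h$ decreases up to $v=\mathcal{J}_{t_1}[\tau,\mathcal{X}]$ and increases afterwards, and the two candidate endpoints give equal values, so the maximum is $h(1+\tau)$. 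You instead substitute $u=a^{t_1}$, $v=b^{t_1}$ and run a two-point majorization (Karamata) argument for the convex increasing map $s\mapsto s^{t_2/t_1}$ under the constraints $u,v\leq(1+\tau)^{t_1}$ and $u+v\leq 2(\mathcal{J}_{t_1}[\tau,\mathcal{X}])^{t_1}$. This buys two things: it avoids the calculus entirely (no derivative, no case split on where $v$ lands relative to $\mathcal{J}_{t_1}[\tau,\mathcal{X}]$), and it makes explicit the one point the paper glosses over, namely that the bracket $2(\mathcal{J}_{t_1}[\tau,\mathcal{X}])^{t_1}-(1+\tau)^{t_1}$ is nonnegative (your $x_1=x_2$ test vector), which is needed for the fractional power to make sense in both proofs. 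The paper's approach, on the other hand, yields slightly more information (the exact shape of $h$ and the location of its minimum), though none of it is used in the statement. Both arguments are complete and correct.
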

	
	\begin{proof} As the function $f(x)=x^{\frac{t_{2}}{t_{1}}}$ is a convex function for $t_{2}\geq t_{1}\geq1$, the first inequality is obvious from\\
		$$(\frac{\|x_1+\tau x_2\|^{t_1}+\|\tau x_1- x_2\|^{t_1}}{2})^{\frac{1}{t_1}}\leq(\frac{\|x_1+\tau x_2\|^{t_2}+\|\tau x_1- x_2\|^{t_2}}{2})^{\frac{1}{t_2}}.$$
		\\In order to prove the second inequality, just prove that for any $x_1, x_2 \in \mathcal{S}_\mathcal{X}$, the following inequality is true\\
		$$\|x_1+\tau x_2\|^{t_2}+\|\tau x_1- x_2\|^{t_2}\leq(1+\tau)^{t_2}+[2\left(\mathcal{J}_{t_{1}}[\tau,\mathcal{X}]\right)^{t_{1}}-(1+\tau)^{t_1}]^{\frac{t_2}{t_1}}.$$
		\\Let $v=\max \{\| x_1+\tau x_2\|,\|\tau x_1- x_2\|\}$, then $1 \leq v \leq 1+\tau$. By having in mind that\\ $$\begin{aligned}\|x_1+\tau x_2\|^{t_{1}}+\|\tau x_1- x_2\|^{t_{1}}&=v^{t_{1}}+\operatorname*{min}\{\|x_1+\tau x_2\|^{t_1},|\tau x_1- x_2\|^{t_1}\}\\
			&\leq2\left(\mathcal{J}_{t_{1}}[\tau,\mathcal{X}]\right)^{t_{1}},\end{aligned}$$
		\\we easily have\\ $$\begin{aligned}\|x_1+\tau x_2\|^{t_2}+\|\tau x_1- x_2\|^{t_2}&=v^{t_2}+\operatorname*{min}\{\|x_1+\tau x_2\|^{t_2},|\tau x_1- x_2\|^{t_2}\}\\
			&\leq v^{t_2}+[2\left(\mathcal{J}_{t_{1}}[\tau,\mathcal{X}]\right)^{t_{1}}-v^{t_1}]^{\frac{t_2}{t_1}}=:h(v).\end{aligned}$$
		\\First, as the function $h(v)$ satisfies\\ $$h(1+\tau)=h([2\left(\mathcal{J}_{t_{1}}[\tau,\mathcal{X}]\right)^{t_{1}}-(1+\tau)^{t_1}]^{\frac{1}{t_1}}),$$
		\\and by\\ $$h^{\prime}(v)=t_2v^{t_1-1}[v^{t_2-t_1}-(2\left(\mathcal{J}_{t_{1}}[\tau,\mathcal{X}]\right)^{t_{1}} -v^{t_1})^{\frac{t_2-t_1}{t_1}}],$$
		\\we have that it is decreasing on $[(2\left(\mathcal{J}_{t_{1}}[\tau,\mathcal{X}]\right)^{t_{1}}-(1+\tau)^{t_1})^{\frac{1}{t_1}},\mathcal{J}_{t_{1}}[\tau,\mathcal{X}]]$ and increasing on $[\mathcal{J}_{t_{1}}[\tau,\mathcal{X}],1+\tau],$ holds if\\ $$v\in[(2\left(\mathcal{J}_{t_{1}}[\tau,\mathcal{X}]\right)^{t_{1}}-(1+\tau)^{t_1})^{\frac{1}{t_1}},1+\tau].$$
		\\On the other hand, if \\$$1\leq v\leq[2\left(\mathcal{J}_{t_{1}}[\tau,\mathcal{X}]\right)^{t_{1}}-(1+\tau)^{t_1}]^{\frac{1}{t_1}},$$ \\then\\
		$$\|x_1+\tau x_2\|^{t_2}+\|\tau x_1- x_2\|^{t_2}\leq(1+\tau)^{t_2}+[2\left(\mathcal{J}_{t_{1}}[\tau,\mathcal{X}]\right)^{t_{1}}-(1+\tau)^{t_1}]^{\frac{t_2}{t_1}}.$$
		\\Thus, we obtain the inequality.
		
	\end{proof}
	
	\begin{Corollary}\label{Proposition11}
		Let $t \geq 1$ and $0 \leq \tau\leq   1$. Then, for any Banach space $\mathcal{X}$,\\
		$$
		\left(\mathcal{J}_1[\tau, \mathcal{X}]\right)^{t} \leq \left(\mathcal{J}_t[\tau, \mathcal{X}]\right)^{t}\leq \frac{(1+\tau)^t+\left[2	\mathcal{J}_{1}[\tau,\mathcal{X}] -(1+\tau)\right]^t}{2}.
		$$
		\\In essence, we have that $\mathcal{J}_1[\tau, \mathcal{X}]=1+\tau$ if and only if $\mathcal{J}_{t}[\tau,\mathcal{X}]=1+\tau$.
	\end{Corollary}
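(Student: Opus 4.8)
The plan is to obtain this Corollary as the special case $t_1 = 1$, $t_2 = t$ of the preceding Theorem, so that essentially no fresh argument is needed; one only has to check that the substitution reproduces the stated formula. Since $t \geq 1$ gives $t_2 = t \geq 1 = t_1 \geq 1$, and the hypothesis $0 \leq \tau \leq 1$ is left unchanged, the hypotheses of the Theorem hold verbatim, so its conclusion applies directly.

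First I would put $t_1 = 1$ and $t_2 = t$ into the chain of inequalities furnished by the Theorem. The left-hand term $\left(\mathcal{J}_{t_1}[\tau,\mathcal{X}]\right)^{t_2}$ becomes $\left(\mathcal{J}_1[\tau,\mathcal{X}]\right)^{t}$, the middle term $\left(\mathcal{J}_{t_2}[\tau,\mathcal{X}]\right)^{t_2}$ becomes $\left(\mathcal{J}_t[\tau,\mathcal{X}]\right)^{t}$, and in the right-hand term the bracketed quantity collapses, because the exponent $t_2/t_1$ equals $t$ and the inner power $t_1$ equals $1$:
$$\left[2\left(\mathcal{J}_{t_1}[\tau,\mathcal{X}]\right)^{t_1}-(1+\tau)^{t_1}\right]^{t_2/t_1} = \left[2\mathcal{J}_1[\tau,\mathcal{X}]-(1+\tau)\right]^{t}.$$
Hence the Theorem yields exactly
$$\left(\mathcal{J}_1[\tau, \mathcal{X}]\right)^{t} \leq \left(\mathcal{J}_t[\tau, \mathcal{X}]\right)^{t}\leq \frac{(1+\tau)^t+\left[2\mathcal{J}_{1}[\tau,\mathcal{X}] -(1+\tau)\right]^t}{2},$$
which is the asserted double inequality.

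For the final equivalence one may simply quote the corresponding clause of the Theorem with $t_1 = 1$, $t_2 = t$. As a cross-check, it can also be read off from the double inequality itself: if $\mathcal{J}_1[\tau,\mathcal{X}] = 1+\tau$, the right-hand side reduces to $\frac{1}{2}\left((1+\tau)^t + (1+\tau)^t\right) = (1+\tau)^t$, forcing $\mathcal{J}_t[\tau,\mathcal{X}] \leq 1+\tau$, while the left-hand inequality gives $\mathcal{J}_t[\tau,\mathcal{X}] \geq \mathcal{J}_1[\tau,\mathcal{X}] = 1+\tau$; conversely, if $\mathcal{J}_t[\tau,\mathcal{X}] = 1+\tau$ then $(1+\tau)^t \leq \frac{1}{2}\left((1+\tau)^t + \left[2\mathcal{J}_1[\tau,\mathcal{X}] - (1+\tau)\right]^t\right)$, so $2\mathcal{J}_1[\tau,\mathcal{X}] - (1+\tau) \geq 1+\tau$, i.e. $\mathcal{J}_1[\tau,\mathcal{X}] \geq 1+\tau$, and the reverse bound $\mathcal{J}_1[\tau,\mathcal{X}] \leq 1+\tau$ is immediate from the triangle inequality, since $\|x_1+\tau x_2\| \leq 1+\tau$ and $\|\tau x_1 - x_2\| \leq 1+\tau$ for all $x_1, x_2 \in \mathcal{S}_\mathcal{X}$. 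There is no genuine obstacle here; the only point not to overlook is that the exponent $t_2/t_1$ collapses to $t$ (and $t_1$ to $1$) when $t_1 = 1$, which is what makes the Corollary an immediate specialization of the Theorem.
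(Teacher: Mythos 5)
Your proposal is correct and matches the paper's intent: the paper's own ``proof'' simply defers to the preceding theorem, and specializing it with $t_1=1$, $t_2=t$ is exactly the right way to make that deferral precise. Your additional direct verification of the equivalence clause from the double inequality is a harmless (and slightly more careful) bonus, not a different method.
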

	
	\begin{proof}
		The proof of this corollary is similar to that of Theorem \ref{Proposition11}.
	\end{proof}

	\begin{Remark} According to the corollary \ref{Proposition11}, we can conclude that\\$$
		(\mathcal{J}_{1}[\tau,\mathcal{X}])^t \leq \left(\mathcal{J}_t[\tau, \mathcal{X}]\right)^{t} \leq 2^{t-1}\left(\mathcal{J}_{1}[\tau,\mathcal{X}]\right)^t .
		$$
		\\On the other hand, if $0 \leq t \leq 1$, then\\
		$$
		2^{t-1}(\mathcal{J}_{1}[\tau,\mathcal{X}])^t \leq \left(\mathcal{J}_t[\tau, \mathcal{X}]\right)^{t} \leq (\mathcal{J}_{1}[\tau,\mathcal{X}])^t .
		$$
		\\In fact, It can be derived from the inequality below (for any $x_1, x_2 \in \mathcal{S}_\mathcal{X}$, and $\tau \in[0,1]$ ):\\
		$$\begin{aligned}
			(\| x_1+\tau x_2\|+\|\tau x_1- x_2\|)^{t}&\leq\| x_1+\tau x_2\|^t+\|\tau x_1- x_2\|^t\\
			&\leq 2^{1-t}(\| x_1+\tau x_2\|+\|\tau x_1- x_2\|)^{t}.
		\end{aligned}$$\\
	\end{Remark}
	Next, we will compare the constant $\mathcal{J}_t [\tau, \mathcal{X}]$ in relation to the modulus of convexity $\delta_\mathcal{X}(\varepsilon)$. Then, we give the following definition:
	\begin{Definition}
		\cite{10} $\forall\varepsilon\in[0,2],$ the modulus of convexity $\delta_{\mathcal{X}}(\epsilon)$ of the Banach space $\mathcal{X}$ is defined to be\\
		$$\begin{aligned}\delta_{\mathcal{X}}(\epsilon)&=\inf\left\{1-\frac{\|x_1+x_2\|}{2};x_1,x_2\in \mathcal{S}_{\mathcal{X}},\|x_1-x_2\|\geq\epsilon\right\}\\&=\inf\left\{1-\frac{\|x_1+x_2\|}{2};x_1,x_2\in \mathcal{S}_{\mathcal{X}},\|x_1-x_2\|=\epsilon\right\}\\&=\inf\left\{1-\frac{\|x_1+x_2\|}{2};x_1,x_2\in \mathcal{B}_{\mathcal{X}},\|x_1-x_2\|\geq\epsilon\right\}.\end{aligned}$$
	\end{Definition}

	\begin{Proposition}
		Let $t\in \mathbb{R}$, then for any Banach space $\mathcal{X}$, \\
		$$
		\mathcal{J}_t [\tau, \mathcal{X}]\leq\sup \left\{\mathcal{M}_t(3-2\delta_{\mathcal{X}}(\varepsilon)-\tau,\tau\varepsilon+1-\tau):\varepsilon \in[0,2] \right\}.
		$$
	\end{Proposition}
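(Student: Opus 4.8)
The plan is to fix an arbitrary pair $x_1,x_2\in\mathcal{S}_\mathcal{X}$, put $\varepsilon:=\|x_1-x_2\|$ (so automatically $\varepsilon\in[0,2]$), estimate each of the two norms occurring inside $\mathcal{M}_t$ by one of the two arguments on the right-hand side, and then finish by invoking the monotonicity of the generalized mean.

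First I would bound $\|x_1+\tau x_2\|$. Using the splitting $x_1+\tau x_2=(x_1+x_2)+(\tau-1)x_2$ together with the formulation of the modulus of convexity with $\|x_1-x_2\|=\varepsilon$ — which yields $\|x_1+x_2\|\le 2\bigl(1-\delta_\mathcal{X}(\varepsilon)\bigr)$ — the triangle inequality gives (here one uses $0\le\tau\le1$, so that $\|(\tau-1)x_2\|=1-\tau$)
$$\|x_1+\tau x_2\|\le\|x_1+x_2\|+(1-\tau)\le 2\bigl(1-\delta_\mathcal{X}(\varepsilon)\bigr)+1-\tau=3-2\delta_\mathcal{X}(\varepsilon)-\tau.$$
Next I would bound $\|\tau x_1-x_2\|$ via the splitting $\tau x_1-x_2=\tau(x_1-x_2)+(\tau-1)x_2$, which gives directly $\|\tau x_1-x_2\|\le\tau\varepsilon+1-\tau$. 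Both right-hand sides are nonnegative, since $\delta_\mathcal{X}(\varepsilon)\le1$ and $\tau\le1$.

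The second ingredient is that for every $t\in\mathbb{R}$ the generalized mean $\mathcal{M}_t(a,b)$ is nondecreasing in each of its two arguments: for $t>0$ and for $t=0$ this is immediate, while for $t<0$ it follows because $\mathcal{M}_t$ is then the composition of the decreasing map $a\mapsto a^t$ with the decreasing map $s\mapsto s^{1/t}$. Applying this monotonicity to the two estimates above, one obtains for the fixed pair $x_1,x_2$
$$\mathcal{M}_t\bigl(\|x_1+\tau x_2\|,\|\tau x_1-x_2\|\bigr)\le\mathcal{M}_t\bigl(3-2\delta_\mathcal{X}(\varepsilon)-\tau,\ \tau\varepsilon+1-\tau\bigr)\le\sup_{\varepsilon\in[0,2]}\mathcal{M}_t\bigl(3-2\delta_\mathcal{X}(\varepsilon)-\tau,\ \tau\varepsilon+1-\tau\bigr),$$
where the last inequality is legitimate because the value of $\varepsilon$ produced from $x_1,x_2$ does lie in $[0,2]$. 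Taking the supremum over all $x_1,x_2\in\mathcal{S}_\mathcal{X}$ on the left yields exactly the claimed inequality.

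The step I expect to require the most care — though it is a minor one — is the handling of the degenerate configurations in which one of $\|x_1+\tau x_2\|$, $\|\tau x_1-x_2\|$ is zero, since $\mathcal{M}_t$ with $t\le0$ is only literally defined for strictly positive entries; such a vanishing forces $\tau=1$ and $x_1=\pm x_2$, and in that situation the inequality is checked directly from the convention for $\mathcal{M}_t$ at a zero entry, or one simply notes that such pairs never attain the supremum defining $\mathcal{J}_t[\tau,\mathcal{X}]$ (and the borderline case $t=-\infty$, if one wishes to include it, then follows by passing to the limit). Everything else is a routine combination of the triangle inequality, the definition of $\delta_\mathcal{X}$, and the monotonicity of the generalized mean.
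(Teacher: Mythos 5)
Your argument is correct and is essentially the proof given in the paper: the same decompositions $x_1+\tau x_2=(x_1+x_2)+(\tau-1)x_2$ and $\tau x_1-x_2=\tau(x_1-x_2)+(\tau-1)x_2$, the same bound $\|x_1+x_2\|\le 2\bigl(1-\delta_{\mathcal{X}}(\varepsilon)\bigr)$ with $\varepsilon=\|x_1-x_2\|$, and the monotonicity of $\mathcal{M}_t$. Your explicit remarks that the estimate needs $0\le\tau\le1$ and that degenerate zero arguments require a convention are welcome points of care that the paper leaves implicit, but they do not change the route.
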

	
	\begin{proof}
		Let $\varepsilon \in[0,2]$, then there exist $x_1,x_2\in \mathcal{S}_\mathcal{X}$ such as $\|x_1-x_2\|=\varepsilon$ and $
		\|x_1+x_2\|\leq2(1-\delta_{\mathcal{X}}(\varepsilon))$.
		
		Since $\|x_1+\tau x_2\|\leq\|x_1+x_2\|+(1-\tau)$ and $\|\tau x_1-x_2\|\leq\tau\|x_1-x_2\|+(1-\tau)$,
		therefore\\ 
		$$\mathcal{M}_ t(\|x_1+\tau x_2\|,\|\tau x_1-x_2\|)\leq \mathcal{M}_t(3-2\delta_{\mathcal{X}}(\varepsilon)-\tau,\tau\varepsilon+1-\tau),
		$$
		\\as required.
	\end{proof}
	
	\begin{Theorem}
		Let $\mathcal{X}$ be uniformly non-square. Then\\
		$$\left(\mathcal{J}_t[\tau,\mathcal{X}]\right)^{t}\leq\operatorname*{max}_{\mathcal{J}(\mathcal{X})\leq\varepsilon\leq2}\frac{[1-\tau+\tau\varepsilon]^{t}+[1+\tau-2\tau\delta_{\mathcal{X}}(\varepsilon)]^{t}}{2}.$$
	\end{Theorem}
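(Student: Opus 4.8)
Throughout I assume $t\ge 1$ and $0\le\tau\le1$, the range in which the inequality is meant. The plan has two stages: first an estimate through the modulus of convexity, in the exact spirit of the Proposition just proved, and then a reduction of the admissible range of the parameter from $[0,2]$ down to $[\mathcal J(\mathcal X),2]$, which is where uniform non-squareness will enter.

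For the first stage I would fix $x_1,x_2\in\mathcal S_\mathcal X$ and set $\varepsilon=\|x_1-x_2\|\in[0,2]$, so that $\|x_1+x_2\|\le 2\bigl(1-\delta_\mathcal X(\varepsilon)\bigr)$ by the definition of $\delta_\mathcal X$. Writing
$$x_1+\tau x_2=(1-\tau)x_1+\tau(x_1+x_2),\qquad \tau x_1-x_2=\tau(x_1-x_2)-(1-\tau)x_2$$
and applying the triangle inequality gives $\|x_1+\tau x_2\|\le 1+\tau-2\tau\delta_\mathcal X(\varepsilon)$ and $\|\tau x_1-x_2\|\le 1-\tau+\tau\varepsilon$. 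Since $\mathcal M_t(a,b)^{t}=\frac{a^{t}+b^{t}}{2}$ and $x\mapsto x^{t}$ is increasing on $[0,\infty)$, it follows that
$$\mathcal M_t\bigl(\|x_1+\tau x_2\|,\|\tau x_1-x_2\|\bigr)^{t}\le \frac{[1-\tau+\tau\varepsilon]^{t}+[1+\tau-2\tau\delta_\mathcal X(\varepsilon)]^{t}}{2}=:F(\varepsilon),$$
and taking the supremum over $x_1,x_2$ gives $\bigl(\mathcal J_t[\tau,\mathcal X]\bigr)^{t}\le\sup_{0\le\varepsilon\le2}F(\varepsilon)$. This stage is routine; it parallels the preceding Proposition, the only refinement being the sharper bound on $\|x_1+\tau x_2\|$ obtained from the convex-combination decomposition $x_1+\tau x_2=(1-\tau)x_1+\tau(x_1+x_2)$.

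The substance of the theorem is then the claim that $\sup_{0\le\varepsilon\le2}F(\varepsilon)=\max_{\mathcal J(\mathcal X)\le\varepsilon\le2}F(\varepsilon)$, i.e.\ that no $\varepsilon<\mathcal J(\mathcal X)$ improves on the maximum. I would establish this by a reflection across $\mathcal J(\mathcal X)$: given $\varepsilon_1\in[0,\mathcal J(\mathcal X)]$ set $\varepsilon_2:=2-2\delta_\mathcal X(\varepsilon_1)$. Because $\delta_\mathcal X$ is nondecreasing and $\delta_\mathcal X(\mathcal J(\mathcal X))=1-\mathcal J(\mathcal X)/2$ — the standard identity $\mathcal J(\mathcal X)=\sup_{\varepsilon}\min\{2-2\delta_\mathcal X(\varepsilon),\varepsilon\}$, available since $\mathcal J(\mathcal X)<2$ — one has $\varepsilon_2\in[\mathcal J(\mathcal X),2]$, and the definition of $\varepsilon_2$ forces $1-\tau+\tau\varepsilon_2=1+\tau-2\tau\delta_\mathcal X(\varepsilon_1)$, i.e.\ the first base of $F$ at $\varepsilon_2$ equals the second base of $F$ at $\varepsilon_1$. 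Hence $F(\varepsilon_2)\ge F(\varepsilon_1)$ reduces to the single inequality $1+\tau-2\tau\delta_\mathcal X(\varepsilon_2)\ge 1-\tau+\tau\varepsilon_1$, equivalently, with $g:=2-2\delta_\mathcal X$, to $g\bigl(g(\varepsilon_1)\bigr)\ge\varepsilon_1$. I would isolate $g\circ g\ge\mathrm{id}$ on $[0,2]$ as a lemma, to be proved geometrically: choose unit vectors $u,v$ with $\|u-v\|\ge\varepsilon$ and $\|u+v\|>g(\varepsilon)-\eta$, feed the pair $(u,-v)$ into the definition of $\delta_\mathcal X$ to obtain $\|u-v\|\le g(\|u+v\|)\le g\bigl(g(\varepsilon)-\eta\bigr)$, and let $\eta\to0$ using the continuity of $\delta_\mathcal X$ on $[0,2)$. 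With the lemma in hand the reflection yields $F(\varepsilon_1)\le\max_{[\mathcal J(\mathcal X),2]}F$ for every $\varepsilon_1\le\mathcal J(\mathcal X)$, and together with the first stage this is the assertion.

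I expect the reflection lemma, and the endpoint bookkeeping it carries, to be the real obstacle. Away from $\varepsilon=2$, and when $\delta_\mathcal X(\varepsilon)>0$, it drops out cleanly from the $v\mapsto-v$ symmetry and the continuity of the modulus of convexity; but in the degenerate case $\delta_\mathcal X(\varepsilon_1)=0$ (so $\varepsilon_2=2$) one must instead compare $F(\varepsilon_1)$ with $F(\varepsilon_2')$ for $\varepsilon_2'\to2^{-}$, and there it is essential that $\mathcal X$ be uniformly non-square so that $\varepsilon_2'$ can be kept inside $[\mathcal J(\mathcal X),2)$ with the correct one-sided limit. Everything else is elementary manipulation of the two displayed bounds.
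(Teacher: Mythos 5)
Your proof is correct, and its first half is the paper's: your two decompositions are exactly the inequalities $\|x_1+\tau x_2\|\le\tau\|x_1+x_2\|+(1-\tau)$ and $\|\tau x_1-x_2\|\le\tau\|x_1-x_2\|+(1-\tau)$ with which the paper's proof opens, combined with $\|x_1+x_2\|\le 2(1-\delta_{\mathcal X}(\varepsilon))$. The divergence is in how $\varepsilon$ gets confined to $[\mathcal J(\mathcal X),2]$. The paper never leaves the level of the vectors: it separates the case $\max\{\|x_1+x_2\|,\|x_1-x_2\|\}\le\mathcal J(\mathcal X)$, where both brackets are at most $1-\tau+\tau\mathcal J(\mathcal X)$ and the identity $\delta_{\mathcal X}(\mathcal J(\mathcal X))=1-\mathcal J(\mathcal X)/2$ identifies $[1-\tau+\tau\mathcal J(\mathcal X)]^t$ with the value of the right-hand side at $\varepsilon=\mathcal J(\mathcal X)$, from the complementary case, where (after the substitution $x_2\mapsto-x_2$, which leaves the supremum invariant and merely swaps the roles of $\|x_1+x_2\|$ and $\|x_1-x_2\|$) one may take $\varepsilon=\|x_1-x_2\|\ge\mathcal J(\mathcal X)$ outright. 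You instead argue at the level of the function $F$ on $[0,2]$ and lift the subcritical range by the reflection $\varepsilon\mapsto 2-2\delta_{\mathcal X}(\varepsilon)$, which obliges you to prove the lemma $g\circ g\ge\operatorname{id}$ for $g=2-2\delta_{\mathcal X}$ and to treat separately the endpoint $\varepsilon=2$, where $\delta_{\mathcal X}$ may be discontinuous. Your $(u,-v)$ proof of that lemma and your repair of the degenerate case $\delta_{\mathcal X}(\varepsilon_1)=0$ by letting $\varepsilon_2'\to 2^-$ are both sound, so the argument closes; but the paper's case split reaches the same conclusion with no appeal to the continuity of $\delta_{\mathcal X}$, because the pair $(x_1,x_2)$ itself tells you which regime you are in. What your route buys is a self-contained statement about the modulus alone, namely $\sup_{[0,2]}F=\sup_{[\mathcal J(\mathcal X),2]}F$, independent of the particular functional being estimated; what it costs is the reflection lemma and the endpoint bookkeeping you yourself flag as the obstacle. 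Both arguments use uniform non-squareness only through $\mathcal J(\mathcal X)<2$ and $\delta_{\mathcal X}(\mathcal J(\mathcal X))=1-\mathcal J(\mathcal X)/2$, and both in truth establish the bound with a supremum rather than a maximum on the right, since $F$ need not attain its supremum on $[\mathcal J(\mathcal X),2]$ when $\delta_{\mathcal X}$ jumps at $2$.
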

	
	\begin{proof}
		For any $x_1,x_2\in \mathcal{S}_{\mathcal{X}}$ and $0\leq\tau\leq1,$ we have\\
		$$\| x_1+\tau x_2\|\leq \tau\|x_1+x_2\|+(1-\tau),$$
		\\and\\$$\|\tau x_1-x_2\|\leq\tau\|x_1-x_2\|+(1-\tau).$$
		\\So, we can conclude\\
		$$\frac{\|x_1+\tau x_2\|^t+\|\tau x_1- x_2\|^t}{2}\leq\frac{[\tau\|x_1+x_2\|+1-\tau]^t+[\tau\|x_1-x_2\|+1-\tau]^t}{2}.$$
		\\By $\mathcal{X}$ is a uniformly non-square space, it is feasible to deduce that $\mathcal{J}(\mathcal{X})<2.$ For $x_1,x_2\in \mathcal{S}_\mathcal{X}$, if we have\\ 
		$$\max\{\|x_1+x_2\|,\|x_1-x_2\|\}\leq \mathcal{J}(\mathcal{X}),$$  \\consequently, this implies that\\
		$$\frac{\|x_1+\tau x_2\|^t+\|\tau x_1- x_2\|^t}{2}\leq[1-\tau+\tau \mathcal{J}(\mathcal{X})]^t.$$
		\\Currently, let us postulate that $\max\{\|x_1+x_2\|,\|x_1-x_2\|\}\geq \mathcal{J}(\mathcal{X}),$ we may assume that $\varepsilon=:\|x_1-x_2\|\geq \mathcal{J}(\mathcal{X}).$ Then by applying $\|x_1+x_2\|\leq2(1-\delta_\mathcal{X}(\varepsilon))$, we have\\
		$$\frac{\|x_1+\tau x_2\|^t+\|\tau x_1- x_2\|^t}{2}\leq\max_{\mathcal{J}(\mathcal{X})\leq\varepsilon\leq2}\frac{[1-\tau+\tau\varepsilon]^t+[1+\tau-2\tau\delta_\mathcal{X}(\varepsilon)]^t}{2}.$$  
		\\On the other hand, $\delta_\mathcal{X}(\mathcal{J}(\mathcal{X}))=1-\frac{\mathcal{J}(\mathcal{X})}{2}$ is valid for $\mathcal{J}(\mathcal{X})<2,$ hence\\
		$$[1-\tau+\tau \mathcal{J}(\mathcal{X})]^t\leq\max_{\mathcal{J}(\mathcal{X})\leq\varepsilon\leq2}\frac{[1-\tau+\tau\varepsilon]^t+[1+\tau-2\tau\delta_\mathcal{X}(\varepsilon)]^t}{2}.$$
		\\Therefore, this completes the proof.
	\end{proof}

	Last but not least, we use this skew constant $\mathcal{J}_t [\tau, \mathcal{X}]$ to define a new constant $\mathcal{G}_t(\mathcal{X})$.
	\begin{Definition}
		
		Let $\tau\geq0$ and $-\infty \leq t<\infty$, \\
		$$
		\mathcal{G}_t(\mathcal{X})=\sup \left\{\frac{\left(\mathcal{J}_t[\tau, \mathcal{X}]\right)^2}{1+\tau^2}: x_1,x_2\in \mathcal{S}_\mathcal{X}\right\}.
		$$
	\end{Definition}
	
	In particular, taking $t=-\infty$ yields the constant $\mathcal{G}_{-\infty}(\mathcal{X})$.\\
	We use the James constant $\mathcal{J} (\mathcal{X})$ to give an estimate of the constant $\mathcal{G}_{-\infty}(\mathcal{X})$. For convenience, the James constant $\mathcal{J} (\mathcal{X})$ is simply $\mathcal{J}$.

	\begin{Theorem}
		Let $\mathcal{X}$ be a Banach space, then\\
		$$\mathcal{G}_{-\infty}(\mathcal{X})\leq (\mathcal{J}-1)^2+\frac{4(\mathcal{J}-1)^2a}{(\mathcal{J}^2-2\mathcal{J}+a)^2+4(\mathcal{J}-1)^2},$$
		\\where $a=\sqrt{(2\mathcal{J}-\mathcal{J}^2)^2+4(\mathcal{J}-1)^2}$.\\
	\end{Theorem}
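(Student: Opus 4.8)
The plan is to control the skew constant $\mathcal{J}_{-\infty}[\tau,\mathcal{X}]$ linearly in $\tau$ by the classical James constant $\mathcal{J}=\mathcal{J}(\mathcal{X})$, insert this into the definition of $\mathcal{G}_{-\infty}(\mathcal{X})$, reduce the problem to a single-variable supremum over $\tau\in[0,1]$, and then recognise that the closed form obtained is exactly the expression in the statement.

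First I would prove the elementary estimate $\mathcal{J}_{-\infty}[\tau,\mathcal{X}]\le 1+(\mathcal{J}-1)\tau$ for $0\le\tau\le1$. For $x_1,x_2\in\mathcal{S}_\mathcal{X}$ one splits $x_1+\tau x_2=\tau(x_1+x_2)+(1-\tau)x_1$ and $\tau x_1-x_2=\tau(x_1-x_2)-(1-\tau)x_2$, so that $\|x_1+\tau x_2\|\le\tau\|x_1+x_2\|+(1-\tau)$ and $\|\tau x_1-x_2\|\le\tau\|x_1-x_2\|+(1-\tau)$. Taking the minimum of the two left-hand sides and using $\min\{\|x_1+x_2\|,\|x_1-x_2\|\}\le\mathcal{J}$ gives $\min\{\|x_1+\tau x_2\|,\|\tau x_1-x_2\|\}\le\tau\mathcal{J}+1-\tau$, and a supremum over $x_1,x_2$ yields the claim.

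Then I would substitute this into $\mathcal{G}_{-\infty}(\mathcal{X})=\sup_{0\le\tau\le1}\frac{(\mathcal{J}_{-\infty}[\tau,\mathcal{X}])^2}{1+\tau^2}$, reducing everything to maximising $g(\tau)=\frac{(1+(\mathcal{J}-1)\tau)^2}{1+\tau^2}$ over $[0,1]$. Since the James constant always satisfies $\mathcal{J}\le2$, we have $\mathcal{J}-1\in[0,1]$, and from $g'(\tau)=\frac{2(1+(\mathcal{J}-1)\tau)((\mathcal{J}-1)-\tau)}{(1+\tau^2)^2}$ one sees that $g$ increases on $[0,\mathcal{J}-1]$ and decreases on $[\mathcal{J}-1,1]$, so its maximum is $g(\mathcal{J}-1)=\frac{(1+(\mathcal{J}-1)^2)^2}{1+(\mathcal{J}-1)^2}=1+(\mathcal{J}-1)^2$. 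Hence $\mathcal{G}_{-\infty}(\mathcal{X})\le 1+(\mathcal{J}-1)^2$. Finally I would check that the stated bound is just this in disguise: the identity $(2\mathcal{J}-\mathcal{J}^2)^2+4(\mathcal{J}-1)^2=(\mathcal{J}^2-2\mathcal{J}+2)^2$ gives $a=\mathcal{J}^2-2\mathcal{J}+2=(\mathcal{J}-1)^2+1$, so $\mathcal{J}^2-2\mathcal{J}+a=2(\mathcal{J}-1)^2$; then both the numerator $4(\mathcal{J}-1)^2a$ and the denominator $(\mathcal{J}^2-2\mathcal{J}+a)^2+4(\mathcal{J}-1)^2$ reduce to $4(\mathcal{J}-1)^2\big((\mathcal{J}-1)^2+1\big)$, the fraction equals $1$, and the right-hand side collapses to $(\mathcal{J}-1)^2+1$.

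I do not expect a real obstacle here: the substance is the one-line splitting trick together with the elementary maximisation of $g$. The only place that needs care is the algebra in the last step — verifying the perfect-square identity that yields $a=(\mathcal{J}-1)^2+1$ — which shows the forbidding right-hand side to be merely an unsimplified form of $1+(\mathcal{J}-1)^2$; one should also confirm, using $\mathcal{J}\le2$, that the maximum of $g$ on $[0,1]$ is interior (at $\tau=\mathcal{J}-1$) rather than at an endpoint.
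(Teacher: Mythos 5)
Your proof is correct and follows essentially the same route as the paper: the same splitting $x_1+\tau x_2=\tau(x_1+x_2)+(1-\tau)x_1$, $\tau x_1-x_2=\tau(x_1-x_2)-(1-\tau)x_2$ leading to $\min\{\|x_1+\tau x_2\|,\|\tau x_1-x_2\|\}\le \tau\mathcal{J}+1-\tau$, followed by maximising $\frac{(1+(\mathcal{J}-1)\tau)^2}{1+\tau^2}$ over $[0,1]$ at the critical point $\tau=\mathcal{J}-1$ (which coincides with the paper's $\tau=\frac{a+\mathcal{J}^2-2\mathcal{J}}{2(\mathcal{J}-1)}$). Your closing algebraic observation is also correct and worth recording: since $(2\mathcal{J}-\mathcal{J}^2)^2+4(\mathcal{J}-1)^2=(\mathcal{J}^2-2\mathcal{J}+2)^2$, one has $a=(\mathcal{J}-1)^2+1$ and $\mathcal{J}^2-2\mathcal{J}+a=2(\mathcal{J}-1)^2$, so the theorem's right-hand side is exactly $1+(\mathcal{J}-1)^2$ written in unsimplified form.
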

	\begin{proof}
		For any $x_1,x_2 \in \mathcal{S}_\mathcal{X}\text{ and }0\leq\tau\leq1$, we have\\
		$$\| x_1+\tau x_2\|\leq \tau\|x_1+x_2\|+(1-\tau),$$
		\\and\\$$\|\tau x_1-x_2\|\leq\tau\|x_1-x_2\|+(1-\tau).$$
		\\Using the definition of $\mathcal{J}(\mathcal{X})$, the following relation is obtained:\\
		$$\min\{\| x_1+\tau x_2\|^2,\|\tau x_1- x_2\|^2\}\leq\tau^2\mathcal{J}^2+2\tau(1-\tau)\mathcal{J}+(1-\tau)^2.$$
		\\From the definition of the constant $\mathcal{G}_{-\infty}(\mathcal{X})$ and the relation above, we have\\
		$$\begin{aligned}
			\mathcal{G}_{-\infty}(\mathcal{X})&=\sup\biggl\{\frac{\min\{\| x_1+\tau x_2\|^2,\|\tau x_1- x_2\|^2\}}{1+\tau^2}:x_1,x_2\in \mathcal{S}_\mathcal{X}\biggr\}\\
			&\leq \max_{0\leq\tau\leq1}\frac{\tau^2\mathcal{J}^2+2\tau(1-\tau)\mathcal{J}+(1-\tau)^2}{1+\tau^2}.
		\end{aligned}$$
		\\Now, we only need to consider the maximum value of the function\\ $$\mathcal{G}(\tau)=\frac{\tau^{2}\mathcal{J}^{2}+2(1-\tau)\tau \mathcal{J}+(1-\tau)^{2}}{1+\tau^{2}}$$ \\in $\tau\in[0,1]$. By taking the derivative to find the extreme point, we can see that $\mathcal{G}(\tau)$ reaches its maximum at $\tau=\frac{a+(\mathcal{J}^{2}-2\mathcal{J})}{2(\mathcal{J}-1)}$, here\\ $$a=\sqrt{(2\mathcal{J}-\mathcal{J}^{2})^{2}+4(\mathcal{J}-1)^{2}}.$$ \\Notice that $2(\mathcal{J}-1)\tau=a+(\mathcal{J}^2-2\mathcal{J}),$ so\\
		$$\begin{aligned}&\max_{0\leq\tau\leq1}\frac{\tau^2\mathcal{J}^2+2(1-\tau)\tau \mathcal{J}+(1-\tau)^2}{1+\tau^2}\\
			&=(\mathcal{J}-1)^2+\frac{4(\mathcal{J}-1)^2a}{(\mathcal{J}^2-2\mathcal{J}+a)^2+4(\mathcal{J}-1)^2}.\end{aligned}$$
		\\Thus, the estimate in the theorem is proved.	
	\end{proof}

	\begin{Example} 
		
		We can use Theorem 3.3 to estimate the range of values of $\mathcal{G}_{-\infty}(\mathcal{X})$ on the Day-James space $l_2-l_1$.\\
		Given that $\mathcal{J}(l_2-l_1)=\sqrt{\frac{8}{3}}$, use the estimate from Theorem 3.3\\
		$$\mathcal{G}_{-\infty}(\mathcal{X})\leq (\mathcal{J}-1)^2+\frac{4(\mathcal{J}-1)^2a}{(\mathcal{J}^2-2\mathcal{J}+a)^2+4(\mathcal{J}-1)^2},$$
		\\It can be obtained\\
		$$\begin{aligned}
			&\mathcal{G}_{-\infty}(l_2-l_1)\\
			&\leq(\mathcal{J}-1)^2+\frac{4(\mathcal{J}-1)^2a}{(\mathcal{J}^2-2\mathcal{J}+a)^2+4(\mathcal{J}-1)^2}\\
			&=(\mathcal{J}-1)^2+\frac{4(\mathcal{J}-1)^2\sqrt{(2\mathcal{J}-\mathcal{J}^2)^2+4(\mathcal{J}-1)^2}}{(\mathcal{J}^2-2\mathcal{J}+\sqrt{(2\mathcal{J}-\mathcal{J}^2)^2+4(\mathcal{J}-1)^2})^2+4(\mathcal{J}-1)^2}\\
		\end{aligned}$$
		$$\begin{aligned}
			&=(\sqrt{\frac{8}{3}}-1)^2\\
			&+\frac{4(\sqrt{\frac{8}{3}}-1)^2\sqrt{(2\sqrt{\frac{8}{3}}-(\sqrt{\frac{8}{3}})^2)^2+4(\sqrt{\frac{8}{3}}-1)^2}}{((\sqrt{\frac{8}{3}})^2-2\sqrt{\frac{8}{3}}+\sqrt{(2\sqrt{\frac{8}{3}}-(\sqrt{\frac{8}{3}})^2)^2+4(\sqrt{\frac{8}{3}}-1)^2})^2+4(\sqrt{\frac{8}{3}}-1)^2}\\
			&\approx1.4007<\sqrt{2}=\mathcal{C}_\mathcal{Z}(l_2-l_1).
		\end{aligned}$$
		\\Therefore, the Day-James space $l_2-l_1$ is an example of $\mathcal{G}_{-\infty}(\mathcal{X})<\mathcal{C}_{\mathbf{\mathcal{Z}}}(\mathcal{X})$.

	\end{Example}

	
	\subsection*{Acknowledgments}
	This work was completed with the support of Anhui Province Higher Education Science
	Research Project(Natural Science), 2023AH050487.

\end{document}